\documentclass[bjps,preprint]{imsart}

\RequirePackage[OT1]{fontenc}
\usepackage{amsthm,amsmath,natbib}
\RequirePackage{hyperref}

\usepackage[english]{babel}
\usepackage{color}

\usepackage{amsmath, latexsym, amsthm, amsfonts,amssymb}
\usepackage[text={5.8in,8.4in},centering]{geometry}
\usepackage[dvipsnames]{xcolor}
\usepackage{algorithm}
\usepackage{algpseudocode}

\usepackage{epsfig}

\usepackage{extarrows}
\usepackage{graphicx}
\usepackage{float}
\usepackage{multirow}
\usepackage{caption}
\usepackage{subfigure}
\usepackage{booktabs}

\newcommand{\dd}{{\,\mathrm d}}

\renewcommand{\th}{\theta}

\newcommand{\ga}{\gamma}

\newcommand{\eps}{\varepsilon}

\renewcommand{\phi}{\varphi}

\newcommand{\scr}[1]{{\mathcal #1}}

\newcommand{\EE}{\mathbb{E}}
\newcommand{\FF}{\mathbb{F}}

\newcommand{\PP}{\mathbb{P}}
\newcommand{\ind}{\mathbf{1}}

\newcommand{\Bm}{\begin{bmatrix}}
\newcommand{\Em}{\end{bmatrix}}

\newcommand{\Th}{\Theta}


\newcommand{\simind}{\stackrel{\rm ind}{\sim}}
\newcommand{\simiid}{\stackrel{\rm iid}{\sim}}


\newcommand{\cit}{\cite}

\setlength{\textfloatsep}{5pt}


\startlocaldefs
\numberwithin{equation}{section}
\theoremstyle{plain}
\newtheorem{thm}{Theorem}[section]

\newtheorem{lem}[thm]{Lemma}
\newtheorem{cor}[thm]{Corollary}
\newtheorem{rem}[thm]{Remark}

\newtheorem{ass}[thm]{Assumption}

\endlocaldefs

\begin{document}
	
\begin{frontmatter}
		
\title{Bayesian estimation of a decreasing density}
		
\runtitle{Bayesian estimation of a decreasing density}

\begin{aug}
\author{\fnms{Geurt} \snm{Jongbloed}\thanksref{a}\ead[label=e1]{g.jongbloed@tudelft.nl}},
\author{\fnms{Frank} \snm{van der Meulen}\thanksref{a}\ead[label=e2]{f.h.vandermeulen@tudelft.nl}}
\and
\author{\fnms{Lixue} \snm{Pang}\thanksref{a}%
\ead[label=e3]{l.pang@tudelft.nl}}
					
\runauthor{G. Jongbloed et al.}
					
\affiliation[a]{Institute of Applied Mathematics,
Delft University of Technology}					
				
\address{Address of the First, Second and Third authors\\
Van Mourik Broekmanweg 6, 2628 XE Delft, The Netherlands.\\
\printead{e1,e2,e3}}

\end{aug}

\begin{abstract}
Suppose $X_1,\ldots, X_n$ is a random sample from  a bounded and decreasing density $f_0$ on $[0,\infty)$. We are interested in estimating such $f_0$, with special interest in $f_0(0)$. This problem is encountered in various statistical applications and has gained quite some attention in the statistical literature.  It is well known that the  maximum likelihood estimator is inconsistent at zero. This has led several authors to propose   alternative estimators which are consistent.
As any decreasing density can be represented as a scale mixture of uniform densities,  a Bayesian estimator is  obtained by endowing the mixture distribution with the Dirichlet process prior. Assuming this prior, we derive contraction rates of the posterior density at zero by carefully revising arguments presented in  \cit{Salomond}. Several choices of base measure are numerically evaluated and compared.  In a simulation  various frequentist  methods and a Bayesian estimator are compared. Finally,  the Bayesian procedure is applied to current durations data described in \cit{Keiding}.
\end{abstract}

\begin{keyword}[class=MSC]
	\kwd[Primary ]{62G07}
	\kwd[; secondary ]{62N01}
\end{keyword}
		
\begin{keyword}
\kwd{Bayesian nonparametrics}
\kwd{posterior consistency}
\kwd{contraction rate}
\end{keyword}

\end{frontmatter}

\section{Introduction}
\subsection{Setting}
Consider an  independent and identically distributed  sample $X_1,\dots,X_n$  from a bounded decreasing density $f_0$ on $[0,\infty)$. The problem of estimating $f_0$ based on the sample, only using the information that it is decreasing, has attracted quite some attention in the literature.
One of the reasons for this is that the estimation problem arises naturally in several applications. 

To set the stage, we discuss a simple idealized example related to the waiting time paradox. Suppose buses arrive at a bus stop at random times, with independent interarrival times sampled from a distribution with distribution function $H_0$.  At some randomly selected time, somebody arrives and has to wait for a certain amount of time until the next bus arrives. A natural question then is: `what is the distribution of the remaining waiting time until the next bus arrives?' 
In order to derive this distribution, two observations are important. 

The first is, that the time of arrival of the traveller is more likely contained in a long interarrival interval than a short interarrival interval. Under mild assumptions, one can show that actually the length of the whole interarrival interval (so between arrival of the previous and the next bus) containing the time the traveller arrives, can be viewed as a draw from the length biased distribution associated to distribution function $H_0$. This is the distribution with distribution function
\begin{equation}
\label{eq:lengthbiased}
\bar{H}_0(y)=\frac1{\mu_{H_0}}\int_0^y z\,dH_0(z) \mbox{ with }\mu_{H_0}=\int_0^\infty z\,dH_0(z).
\end{equation}
It is assumed that $0<\mu_{H_0}<\infty$.

The second observation is that the remaining waiting time for the traveller is a uniformly distributed fraction of the interarrival time. A residual waiting time $X$ is therefore interpreted as
$$
X=U Y,
$$
where $U$ is uniformly distributed on $(0,1)$ and, independently of $U$,  $Y$ according to distribution function $\bar{H}_0$ defined in (\ref{eq:lengthbiased}). 

These observations imply that on $[0,\infty)$, $X$ has survival function
\begin{eqnarray*}
	P(X>x)&=&P(UY>x)=\int_{y=x}^{\infty}\int_{u=x/y}^1du\,d\bar{H}_0(y)=\int_{y=x}^\infty\left(1-\frac{x}{y}\right)\,d\bar{H}_0(y)\\
	&=&\frac1{\mu_{H_0}}\int_{y=x}^\infty\left(y-x\right)\,dH_0(y)=\frac1{\mu_{H_0}}\int_{y=x}^\infty\left(1-H_0(y)\right)\,dy, 
\end{eqnarray*}
using integration by parts in the last step. Differentiating with respect to $x$, yields the following relation between the sampling density $f_0$ and distribution function $H_0$:  
\begin{equation}
\label{eq:expre}
f_0(x)=\frac1{\mu_{H_0}}\left(1-H_0(x)\right), \,\, x\ge0.
\end{equation}
In words: the sampling density is proportional to a survival function of the interarrival distribution, which is by definition decreasing. Note that in the classical waiting time paradox, the underlying arrival process is taken to be a homogeneous Poisson process, with exponential interarrival times. In view of (\ref{eq:expre}), this leads to the `paradox' that the distribution of the residual waiting time equals the distribution of the interarrival time itself.

More examples where exactly this model comes into play can for instance be found in the introductory section of \cit{Kulikov}, in \cit{Vardi}, \cit{Watson}, \cit{Keiding} and references therein. In those examples, the challenge is to estimate the interarrival distribution function $H_0$ based on a sample from density $f_0$. To do this, the `inverse relation' of (\ref{eq:expre}), expressing $H_0$ in terms of $f_0$ can be employed:
\begin{equation}
\label{eq:invrel}
H_0(x)=1-\mu_{H_0}f_0(x)=1-\frac{f_0(x)}{f_0(0)},\,\,x\ge0.
\end{equation}
Here it is used that $H_0(0)=0$.

From (\ref{eq:invrel}) it is clear that in order to estimate $H_0$ at some specific point $x>0$, estimating the decreasing sampling density $f_0$ at zero is of special interest. This value  occurs at the right hand side for any choice of $x>0$.

\subsection{Literature overview}
The most commonly used estimator for $f_0$ is the maximum likelihood estimator derived in \cit{Grenander}.
This estimator is defined as the maximizer of the log likelihood $\ell(f)=\sum_{i=1}^n\log f(X_i)$
over all decreasing density functions on $(0,\infty)$. The solution $\hat{f}_n$ of this maximization problem can be graphically constructed. Starting from the empirical distribution $\FF_n$ based on $X_1,\ldots,X_n$, the least concave majorant of $\FF_n$ can be constructed. This is a concave distribution function. The left-continuous derivative of this piecewise linear concave function yields the maximum likelihood (or Grenander) estimator for $f_0$. For more details on the derivation of this estimate, see  Section 2.2 in \cit{GroJoCUP}.
As can immediately be inferred from the characterization of the Grenander estimator,
$$
\hat{f}_n(0):=\lim_{x\downarrow0}\hat{f}_n(x)=\max_{1\le i\le n}\frac{\FF_n(X_i)}{X_i}\ge \frac{\FF_n(X_{(1)})}{X_{(1)}}=\frac1{nX_{(1)}},
$$
where $X_{(i)}$ denotes the $i$-th order statistic of the sample.  Denoting convergence in distribution by $\stackrel{d}{\rightarrow}$,
$$
nf_0(0)X_{(1)} \stackrel{d}{\rightarrow} Y \quad \mbox{as} \quad n\rightarrow\infty
$$
where $Y$ has the standard exponential distribution. It is clear that $\hat{f}_n(0)$ does not converge in probability to $f_0(0)$.  This inconsistency of $\hat{f}_n(0)$ was first studied in \cit{WoodroofeSun}. There it is also shown that
$$
\frac{\hat{f}_n(0)}{f_0(0)}\stackrel{d}{\rightarrow} \sup_{t>0}\frac{N(t)}{t}\stackrel{d}{=}\frac1U \quad \mbox{as} \quad n\rightarrow\infty,
$$
where $N$ is a standard Poisson process on $[0,\infty)$ and $U$ is a standard uniform random variable.

It is clear from (\ref{eq:invrel}) that this inconsistency is undesirable, as estimating the distribution function of interest, $H_0$, at any point $x>0$, requires estimation of $f_0(0)$. Various approaches have been taken to obtain a consistent estimator of $f_0(0)$. The idea in \cit{Kulikov} is to estimate $f_0(0)$ by  $\hat{f}_n$ evaluated at a small positive (but vanishing) number: $\hat{f}_n(cn^{-1/3})$ for some  $c>0$. There it is shown that  the estimator is $n^{1/3}$-consistent, assuming $f_0(0)<\infty$ and $|f_0^\prime(0)|<\infty$.

A likelihood related approach was taken in \cit{WoodroofeSun}. There a penalized log likelihood function is introduced, where the estimator is defined as maximizer of
$$
\ell_{\alpha}(f)=\sum_{i=1}^n\log f(X_i)-\alpha n f(0).
$$
For fixed $\alpha\ge0$, this estimator can be computed explicitly by first transforming the data using a data dependent affine transformation and then applying the basic concave majorant algorithm to the empirical distribution function based these transformations data. It is shown (again, assuming $f_0(0)<\infty$ and $|f_0^\prime(0)|<\infty$) that the optimal rate to choose $\alpha$ is $n^{-2/3}$. Then, the maximum penalized estimator $\hat{f}_{n,\hat{\alpha}_n}^P(0)$ is $n^{1/3}$-consistent.

\cit{GroJoCUP} proposed to estimate $f_0(0)$ by the histogram estimator  $b_n^{-1}\mathbb{F}_n(b_n)$, where $\{b_n\}$ is a sequence of positive numbers with $b_n\to 0$ if $n\to \infty$. The bin widths $b_n$ can e.g. be chosen by estimating the asymptotically Mean Squared Error-optimal choice. Also this estimator is $n^{1/3}$- consistent assuming $f_0(0)<\infty$ and $\mid f'_0(0)\mid<\infty$.

\subsection{Approach}
In this paper we take a Bayesian nonparametric approach to the problem. An advantage of the Bayesian setup is the ease of constructing credible regions. To construct frequentist analogues of these, confidence regions, can be quite cumbersome, relying on either bootstrap simulations or asymptotic arguments.

 To formulate a Bayesian approach for estimating a decreasing density, note that any decreasing density on $[0,\infty)$ can be represented as a scale mixture of uniform densities (see e.g. \cit{Wils}):
\begin{equation}f_G(x)=\int_0^\infty \psi_x(\th) dG(\th),\mbox{ where } \psi_x(\theta)=\theta^{-1}1_{[0,\th]}(x),\label{eq:scalemix}\end{equation}
where $G$ is a distribution function concentrated on the positive half line.
Therefore, by endowing the mixing measure with a prior distribution we obtain the posterior distribution of the decreasing density, and in particular of $f_0(0)$. A convenient and well studied prior for distribution functions on the real line is the  Dirichlet process (DP) prior (see for instance \cit{ferguson} and \cit{vdV-Ghosal-book}). This prior contains two parameters: the concentration parameter, usually denoted by $\alpha$, and the base probability distribution, which we will denote by  $G_0$.
The approach where a prior is obtained by putting a Dirichlet process prior on $G$ in \eqref{eq:scalemix} was previously considered in \cit{Salomond}. In that paper, the asymptotic properties of the posterior in a frequentist setup are studied.  More specifically, contraction rates are derived to quantify the performance of the Bayesian procedure. This is a rate for which we can shrink balls around the true parameter value, while maintaining most of the posterior mass. More formally, if
  $L$ is  a semimetric on the space of density functions,  a contraction rate $\eps_n$  is a sequence of positive numbers  $\eps_n\downarrow 0$ for which the posterior mass of the  balls $\{f\,:\, L(f,f_0)\le \eps_n\}$ converges in probability to $1$ as $n\to \infty$, when assuming $X_1,X_2,\ldots$ are independent and identically distributed  with density $f_0$. A general discussion on contraction rates is given in Chapter 8 of \cit{vdV-Ghosal-book}.

\subsection{Contributions}
In Theorem 4 in \cite{Salomond} the rate $(\log n / n)^{2/9}$ is derived for pointwise loss at any $x>0$. For $x=0$, only posterior consistency is derived, essentially under the assumption that the base measure admits a density $g_0$ for which there exists $1<a_1\le a_2$ such that $ e^{-a_1/\th} \lesssim g_0(\th) \lesssim e^{-a_2/\th}$ when $\th$ is sufficiently small (theorem 4).  These are interesting results, though one would hope to prove the rate $n^{-1/3}$ for all $x\ge 0$. Under specific conditions on the underlying density, this rate is attained by estimators to be discussed in section \ref{sec:methods}. We explain why the  techniques in the proof of \cite{Salomond} cannot be used to obtain rates at zero and present an alternative proof (using different arguments). This proof not only reveals consistency, but also yields a contraction rate  equal to $n^{-2/9}$ (up to log factors) that coincides with the  case $x>0$. 
We argue that with the present method of proof a better rate is not easily obtained.
Many  results from \cit{Salomond} are important ingredients to the proof we present. The first key contribution of this paper is to derive the claimed contraction rate, combining some of Salomond's results with new arguments.

We also address computational aspects of the problem and show how draws from the posterior can be obtained using the algorithm presented in \cit{Neal}. Using this algorithm we conduct four studies.
\begin{itemize}
  \item For a fixed dataset, we compare the performance of the posterior mean under various choices of base measure for the Dirichlet process.
  \item  We investigate empirically the rate of convergence of the Bayesian procedure for estimating the density at zero when $g_0(\th) \sim e^{-1/\th}$ or $g_0(\th) \sim \th$ for $\th \downarrow 0$. The simulation results  suggest that for both choices of base measure the rate is $n^{-1/3}$. If   $g_0(\th) \sim e^{-1/\th}$ this implies that the derived rate $n^{-2/9}$ (up to log factors) is indeed  suboptimal, as anticipated by \cite{Salomond}.  If $g_0(\th) \sim \th$ the rate $n^{-1/3}$ is interesting, as it contradicts the belief   that ``due to the similarity to the maximum likelihood estimator, the posterior distribution is in this case not consistent`` (page 1386 in \cite{Salomond}).
\item  We  compare the behaviour of various proposed frequentist methods and the Bayesian method for estimating $f_0(0)$. Here we vary the sample sizes and consider both the Exponential and half-Normal distribution as true data generating distributions.
\item Pointwise credible sets can be approximated in a direct way from MCMC-output, which is much more straightforward than the construction of frequentist confidence intervals based on  large-sample limiting results. 

\end{itemize}

\subsection{Outline}
In section \ref{sec:consistency} we  derive pointwise contraction rates for the density evaluated at $x$, for any $x\ge 0$. In section \ref{sec:alg}  a Markov Chain Monte Carlo method for obtaining draws from the posterior is given, based on the results of  \cit{Neal}. This is followed by a review of some existing methods to consistently estimate $f_0$ at zero. Section \ref{sec:simul} contains numerical illustrations. The appendix contains some technical results.

\subsection{Frequently used notation}
For two sequences $\{a_n\}$ and $\{b_n\}$ of positive real numbers, the notation $a_n\lesssim b_n$  (or $b_n\gtrsim a_n$) means that there exists a constant $C>0$ that is independent of $n$ and such that $a_n\leq C b_n.$ We write $a_n\asymp b_n$ if both $a_n\lesssim b_n$ and $a_n\gtrsim b_n$ hold.  We denote by  $F$ and $F_0$ the cumulative distribution functions corresponding to the probability densities $f$ and $f_0$ respectively.  We denote the $L_1$-distance between two density functions $f$ and $g$ by $L_1(f,g)$, i.e.\ $L_1(f,g) = \int |f(x)-g(x)| \dd x$. The Kullback-Leibler divergence `from $f$ to $f_0$' is denoted by $KL(f,f_0)=\int f(x)\log\frac{f(x)}{f_0(x)}dx$.

\section{Pointwise posterior contraction rates}
\label{sec:consistency}
Let $\scr{F}$ denote the collection of all bounded decreasing densities on $[0,\infty)$ and recall that $X_1,X_2,\dots$ are i.i.d. with density $f\in\mathcal{F}$. Denote the distribution of $X^n=(X_1,\ldots, X_n)$ under $f$ by $\PP_f$ and expectation under $\PP_f$ by $\EE_f$. In this section we are interested in the asymptotic behaviour of the posterior distribution of $f(x)$  in a frequentist setup. This entails that we study the behaviour of the posterior distribution on $\scr{F}$ while assuming a true underlying density $f_0$. Set $\PP_0=\PP_{f_0}$ and $\EE_0=\EE_{f_0}$. Denote the prior measure on $\mathcal{F}$ by $\Pi$ and the posterior measure by $\Pi(\cdot \mid X^n)$.

Given a loss function $L$ on $\scr{F}$, we say that the posterior is consistent with respect to $L$ if for any $\eps>0$, $\EE_0 \Pi(L(f,f_0) >\eps \mid X^n) \to 0$ when $n\to \infty$. If $\{\eps_n\}$ is a sequence that tends to zero, then we say that the posterior contracts  at rate $\eps_n$ (with respect to $L$) if $\EE_0 \Pi(L(f,f_0) >\eps_n \mid X^n) \to 0$ when $n\to \infty$. The rate $\{\eps_n\}$ is called a contraction rate.

\cit{Salomond} derived contraction rates based on the Dirichlet process prior for the $L^1-$, Hellinger- and pointwise loss function.

In the following theorem we  derive sufficient conditions  for posterior contraction in terms of the behaviour of the density of the  base measure near zero. In that, we closely follow the line of proof in \cit{Salomond}.
Although the argument in \cit{Salomond} for proving posterior contraction rate $\epsilon_n$ for $f_0(x)$ with $x>0$ is correct, we prove the theorem below for $x\ge0$ rather than only for $x=0$. The reason for this is twofold: {\it (i)} many steps in the proof for $x>0$ are also used in the proof for $x=0$; {\it (ii)} we obtain one theorem covering pointwise contraction rates for all $x\ge 0$.  For the base measure we have the following assumption.

\begin{ass}\label{ass:base}
The base distribution function of prior, $G_0$, has a strictly positive Lebesgue density $g_0$ on $(0,\infty)$. There exists positive numbers  $\th_0, \underline{a},  \underline{k}, \overline{a}$ such that
\begin{equation}\label{eq:priormass}
\underline{k} e^{-\underline{a}/\th}\le g_0(\theta)\le \theta^{\overline{a}}\quad \mbox{for all} \quad \th \in (0,\theta_0).
\end{equation}	
\end{ass}

For the data generating density we assume
\begin{ass}\label{ass:f_0}
The data generating density   $f_0 \in \scr{F}$ and
\begin{itemize}
  \item there exists an $x_0>0$ such that $\sup_{x\in[0,x_0]} |f_0^{'}(x)|<\infty$;
  \item the exist positive constants $\beta$ and $\tau$ such that $f_0(x) \le e^{-\beta x^\tau}$ for $x$ sufficiently large.
\end{itemize}
\end{ass}
 Theorem 2 in \cite{Salomond} asserts the existence of a positive constant $C$ such that
\[ \Pi\left(f \in \scr{F} \colon  L_1(f, f_0) \ge C \left(\frac{\log{n}}{n}\right)^{1/3} (\log n)^{1/\tau} \mid X^n \right) \to 0, \]
$ \PP_0-\text{almost surely} (n\to \infty)$. This result will be used in the proof for deriving an upper bound on the pointwise contraction rate of the posterior at zero.

Define a sequence of subsets of $\mathcal{F}$ by \[ \scr{F}_n=\{f\in\mathcal{F}\,:\, f(0)-f(x)\le M_n x, \mbox{ for all}\,x\in[0,\xi_n]\},\] where $\xi_n\asymp n^{-2/9}$ and $M_n\asymp (\log n)^{\beta}$.
\begin{thm}\label{th:centralth}  Let $X_1, X_2,\ldots$ be independent random variables, each with density $f_0$ satisfying assumption \ref{ass:f_0}. Let  $\Pi_n$ be the prior distribution on $\scr{F}_n$ that is obtained via (\ref{eq:scalemix}), where  $G \sim DP(G_0, \alpha)$ and $G_0$ satisfies assumption \ref{ass:base}. Assume $\beta>1/3$ (in the behaviour of the sequence $\{M_n\}$). For any   $x\in [0,\infty)$ with $f_0^{'}(x)<0$ there exists a constant $C>0$ such that,
\[ 	\EE_0\Pi\left(f\in\mathcal{F}_n \colon  |f(x)-f_0(x)| > C n^{-2/9}(\log n)^\beta\,\Big|\, X^n\right)\to 0.  \]
for  $n\to\infty$.
\end{thm}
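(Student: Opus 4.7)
The plan is to combine the $L_1$ posterior contraction rate from \cite{Salomond} (Theorem~2, with $\eps_n\asymp(\log n/n)^{1/3}(\log n)^{1/\tau}$) with a sieve argument based on $\scr{F}_n$, handling the interior case $x>0$ and the boundary case $x=0$ in parallel. I would first reduce the claim to a deterministic statement by showing that $\EE_0\Pi(\scr{F}_n^c\mid X^n)\to 0$. Via the usual decomposition of the posterior (a KL-type lower bound for the denominator and a prior mass bound for the numerator), this amounts to an estimate of the form $\Pi(\scr{F}_n^c)\lesssim e^{-C n\eps_n^2}$. Here Assumption~\ref{ass:base} is essential: the lower bound $g_0(\th)\ge\underline{k}e^{-\underline{a}/\th}$ near zero forces the DP draw $G$ to place enough mass on $(0,\xi_n]$ so that $f_G(0)-f_G(x)=\int_{(0,x]}\th^{-1}\dd G(\th)\le M_n x$ holds uniformly on $[0,\xi_n]$ with high prior probability. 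The condition $\beta>1/3$ appears precisely in order to absorb the polylog penalties accumulated in this estimate and in the subsequent testing step.

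On the high-probability event $\scr{F}_n\cap\{L_1(f,f_0)\le\eps_n\}$ the argument becomes deterministic, so it remains to prove $|f(x)-f_0(x)|\lesssim n^{-2/9}(\log n)^\beta$. For $x>0$ with $f_0'(x)<0$, I would invoke Salomond's interior argument: the $L_1$ control combined with monotonicity of $f$, the local Lipschitz bound $L=\sup_{[0,x_0]}|f_0'|$ and the scale-mixture-of-uniforms structure of $f$ yields the claimed pointwise rate. For $x=0$ this direct propagation fails (there is no room to the left of $0$ to turn a pointwise gap into an $L_1$ mass), which is exactly the role of the sieve. Using $\scr{F}_n$,
\[ |f(0)-f_0(0)|\le|f(0)-f(\xi_n)|+|f(\xi_n)-f_0(\xi_n)|+|f_0(\xi_n)-f_0(0)|\le M_n\xi_n+|f(\xi_n)-f_0(\xi_n)|+L\xi_n. \]
Since $f_0'(0)<0$ and $|f_0'|$ is bounded on $[0,x_0]$, $f_0'(\xi_n)<0$ for $n$ large, so the interior case applied at the shrinking point $\xi_n$ controls the middle term at rate $n^{-2/9}(\log n)^\beta$ and the whole right-hand side collapses to $O(M_n\xi_n)\asymp n^{-2/9}(\log n)^\beta$.

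The main obstacle is the sieve prior mass estimate. The complement $\scr{F}_n^c=\{\int_{(0,x]}\th^{-1}\dd G(\th)>M_n x \text{ for some } x\in(0,\xi_n]\}$ is a tail event for the random mixing measure $G$, and bounding it requires combining a stick-breaking-type representation of $G\sim DP(\a,G_0)$ with the precise behaviour of $g_0$ at zero; the exponential tail in Assumption~\ref{ass:base} is the weakest one for which this bound can still be made compatible with the KL prior concentration at scale $n\eps_n^2$. A secondary technical point is that the interior argument has to be applied at the shrinking abscissa $\xi_n\to 0$ rather than at a fixed positive $x$, so one must check that the constants in Salomond's proof remain uniform in a neighbourhood of zero; this follows from $|f_0'|$ being bounded on $[0,x_0]$ together with $f_0'(0)<0$.
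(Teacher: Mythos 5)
Your proposal correctly identifies two ingredients that do appear in the paper's proof -- the sieve $\scr{F}_n$ is used to propagate a gap at $0$ to the interval $[0,\xi_n]$, and Salomond's $L_1$ contraction theorem is intersected with the event of interest -- but the central mechanism is different and one of your key steps does not hold.

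The main gap is the claim that ``on the high-probability event $\scr{F}_n\cap\{L_1(f,f_0)\le\eps_n\}$ the argument becomes deterministic'' at the target rate. It does not. With $f,f_0$ nonincreasing and $|f_0'|\le L$ near $x$, a pointwise gap $f(x)-f_0(x)>\delta$ forces $\int|f-f_0|\gtrsim\delta^2/L$, so $L_1(f,f_0)\le\eps_n\asymp(\log n/n)^{1/3}$ only gives the deterministic pointwise bound $\delta\lesssim\sqrt{\eps_n}\asymp n^{-1/6}(\log n)^{1/6}$, which is strictly worse than the claimed $n^{-2/9}(\log n)^\beta$. The rate $n^{-2/9}$ is \emph{not} a deterministic consequence of $L_1$ control plus monotonicity; in Salomond's proof for $x>0$ it comes from a test-function/Bernstein argument, and the same is true at $x=0$ in this paper. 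What the paper actually does for $B_n^+(0)$ is introduce a \emph{new} test
\[
\Phi_n^+(0)=\ind\Bigl\{ n^{-1}\sum_{i=1}^n\ind_{[0,\xi_n]}(X_i)-\int_0^{\xi_n}f_0(t)\,\dd t>\tilde c_n\Bigr\},\qquad \tilde c_n\asymp \xi_n\eta_n,
\]
and then uses the sieve inequality $f(0)-f(\xi_n)\le M_n\xi_n$ together with monotonicity to show that, for $f\in B_n^+(0)\cap\scr{F}_n$, the expectation $\int_0^{\xi_n}f$ exceeds $\int_0^{\xi_n}f_0$ by at least $\asymp\xi_n\eta_n$; Bernstein then controls the type~II error. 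The event $A_n=\{L_1(f,f_0)\lesssim\eps_n(\log n)^{1/\tau}\}$ enters only to bound the variance term $\int_0^{\xi_n}f\lesssim\xi_n$ inside Bernstein's inequality, not to reduce anything to a deterministic inequality. This new test -- the key technical contribution here -- is entirely absent from your proposal.

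Two further points. First, your plan to establish $\Pi(\scr{F}_n^c)\lesssim e^{-cn\eps_n^2}$ (and hence $\EE_0\Pi(\scr{F}_n^c\mid X^n)\to0$) is neither needed for the theorem as stated nor carried out in the paper: the conclusion bounds the posterior mass of a subset of $\scr{F}_n$, so the sieve restriction sits inside the event rather than being discharged separately. Second, your fall-back idea of applying ``the interior argument at the shrinking abscissa $\xi_n$'' is exactly where a genuine difficulty lives: Salomond's tests for a fixed interior $x$ use intervals on both sides of $x$ of width $\asymp n^{-2/9}$, and when $x=\xi_n\asymp n^{-2/9}$ there is no room to the left, so the argument structure changes rather than merely requiring uniform constants. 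You flag this as a point to check, but it is precisely the obstruction that motivates the paper's one-sided test on $[0,\xi_n]$; without constructing such a test, your proof does not close.
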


In the proof we will use the following  lemma (see appendix B and lemma 8 of \cite{Salomond}).
\begin{lem}\label{lem:KLn}
Let $\epsilon_n=(\log n/n)^{1/3}$ and $f_0$ satisfy assumption \ref{ass:f_0}.
Define
\begin{equation}\label{eq:defDn}D_n=\int\prod_{i=1}^n\frac{f(X_i)}{f_0(X_i)}d\Pi(f).\end{equation} There exist strictly positive constants $c_1$ and $c_2$  such that
\begin{equation}
\label{denominator}
\PP_0\left(D_n< c_1 e^{-c_2 n\epsilon_n^2}\right)=o(1) \quad \mbox{as} \quad n\to \infty.
\end{equation}
\end{lem}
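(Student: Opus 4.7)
The plan is to apply the standard denominator lemma for posterior lower bounds (e.g.\ Lemma 8.1 in \cite{vdV-Ghosal-book}): if $B_n$ is a Kullback--Leibler-type neighborhood of $f_0$ satisfying $\Pi(B_n) \ge e^{-c n \epsilon_n^2}$ for some $c>0$, then
$$\PP_0\bigl(D_n < e^{-(c+1) n \epsilon_n^2}\bigr) \le \frac{1}{n\epsilon_n^2} \to 0.$$
Concretely, let
$$B_n = \Bigl\{f \in \scr{F} : KL(f_0,f) \le \epsilon_n^2,\ \int f_0 \log^2 \tfrac{f_0}{f} \dd x \le \epsilon_n^2\Bigr\},$$
so that the task reduces to proving the prior mass bound $\Pi(B_n) \gtrsim e^{-C n \epsilon_n^2}$.

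To lower-bound this prior mass, I would construct an explicit discrete mixing measure $G^\ast$ whose induced mixture $f_{G^\ast}$ approximates $f_0$, and then show that $G \sim DP(\alpha, G_0)$ concentrates enough mass on a neighborhood of $G^\ast$. First, truncate at level $T_n \asymp (\log n)^{1/\tau}$: the tail bound $f_0(x) \le e^{-\beta x^\tau}$ in Assumption \ref{ass:f_0} makes the tail contribution to $KL$ negligible. Next, using the scale-mixture representation (\ref{eq:scalemix}), identify $f_0$ with its canonical mixing measure and discretize it on a grid $a_n = \theta_1 < \dots < \theta_{N_n} = T_n$ of mesh $\asymp \epsilon_n$, producing $G^\ast$. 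The boundedness of $f_0'$ near the origin (Assumption \ref{ass:f_0}) together with a Taylor-type argument as in Appendix B of \cite{Salomond} then yields $\|f_{G^\ast}-f_0\|_\infty \lesssim \epsilon_n$, which, after handling the behaviour of $\log(f_0/f_{G^\ast})$ on the tails, gives both $KL(f_0,f_{G^\ast}) \lesssim \epsilon_n^2$ and the analogous bound for the second-moment functional.

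To convert this into a DP prior mass bound, use the standard Dirichlet-simplex approximation (see Lemma~A.2 and Appendix~B in \cite{Salomond}): partition $(0,\infty)$ into cells $A_0=[0,a_n]$, $A_j$ around each $\theta_j$, and an outer tail, and bound from below the probability that $G(A_j)$ is within $\epsilon_n$ of $G^\ast(A_j)$ for every $j$. By Dirichlet marginal calculations this probability is, up to polynomial-in-$N_n$ factors, at least $\prod_j G_0(A_j)^{\alpha G^\ast(A_j)}$. The decisive factor is the cell nearest zero, where Assumption \ref{ass:base} gives $G_0(A_j) \gtrsim \epsilon_n \exp(-\underline a /a_n)$. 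The main obstacle is tuning $a_n$: it must be small enough that $f_{G^\ast}$ resolves $f_0$ near the origin (which drives $a_n \downarrow 0$), yet large enough that $\exp(-\underline a/a_n)$ does not overwhelm the prior mass bound. With $a_n$ chosen of the same polynomial order as $\epsilon_n$ (and $N_n \asymp T_n/\epsilon_n$), the logarithm of the above product is bounded below by $-C n\epsilon_n^2$, so that $\Pi(B_n) \gtrsim e^{-Cn\epsilon_n^2}$. Inserting this into the standard denominator lemma produces the constants $c_1, c_2$ of (\ref{denominator}) and completes the proof.
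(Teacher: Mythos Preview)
Your outline follows essentially the same route as the paper (which defers to Appendix~B and Lemma~8 of \cite{Salomond}): approximate $f_0$ by a finite uniform mixture, then lower-bound the DP prior mass of a neighbourhood of the approximating mixing measure via the finite-dimensional Dirichlet distribution on a partition. So the strategy is right, but two of your steps would fail as written.

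\medskip

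\textbf{The KL ball is the wrong neighbourhood for this kernel.} Your set $B_n=\{f:KL(f_0,f)\le\epsilon_n^2,\ldots\}$ cannot be shown to have the required prior mass by the construction you sketch. The approximating mixture $f_{G^\ast}$ you build is supported on $[0,T_n]$, so $f_{G^\ast}(x)=0$ for $x>T_n$ while $f_0(x)>0$ there; hence $KL(f_0,f_{G^\ast})=\infty$, and ``handling the behaviour of $\log(f_0/f_{G^\ast})$ on the tails'' is not a matter of a small remainder but of an honest divergence. More generally, any $f_G$ with $G$ close to $G^\ast$ in the partition sense you describe need not have enough mass in the far tail to make $KL(f_0,f_G)$ finite, let alone $\le\epsilon_n^2$. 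The paper (following \cite{Salomond}) circumvents this by replacing $B_n$ with the truncated neighbourhood
\[
\scr{S}_n=\Bigl\{f:KL(f_{0,n},f_n)\le\epsilon_n^2,\ \int f_{0,n}\log^2\tfrac{f_{0,n}}{f_n}\le\epsilon_n^2,\ F(\theta_n)\ge1-\epsilon_n^2\Bigr\},
\]
where $f_n,f_{0,n}$ are the densities renormalised to $[0,\theta_n]$ with $\theta_n=F_0^{-1}(1-\epsilon_n/(2n))$. One then checks separately that (i) $\scr{N}_n\subset\scr{S}_n$ for the partition neighbourhood $\scr{N}_n$, and (ii) the denominator bound \eqref{denominator} still follows from $\Pi(\scr{S}_n)\gtrsim e^{-Cn\epsilon_n^2}$ via a modified version of the standard lemma. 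You need this truncation device or an equivalent one.

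\medskip

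\textbf{The Dirichlet lower bound you quote is not the right one.} The expression $\prod_j G_0(A_j)^{\alpha G^\ast(A_j)}$ is not what the finite-Dirichlet calculation gives. With partition cells $U_0,\ldots,U_{N'+\tilde N}$ and target masses $\tilde p_i$, the bound (Lemma~6.1 of \cite{Ghosal}, reproduced in the paper's appendix) is
\[
\Pi(\scr{N}_n)\gtrsim \epsilon_n^{c(N'+\tilde N)}\prod_{i=1}^{N'+\tilde N} G_0(U_i),
\]
and the work lies in controlling $\sum_{i\le N'}\log G_0(U_i)$ for the cells $U_i=(i\epsilon_n,(i+1)\epsilon_n]$ near the origin. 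It is the whole sum $\sum_{i=1}^{N'}\exp(-\underline a/(i\epsilon_n))$, not a single ``decisive'' cell, that produces the factor $\epsilon_n^{-1}\log\epsilon_n$ matching $n\epsilon_n^2$; this is precisely where the exponent $\gamma=1$ in Assumption~\ref{ass:base} is sharp (cf.\ the discussion around \eqref{eq:required}).
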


We now give the proof of Theorem \ref{th:centralth}.
\begin{proof}[Proof of Theorem \ref{th:centralth}]
The posterior measure of a measurable set $\scr{E}\subset\scr{F}$ is given by
\[\Pi(\scr{E} \mid X^n)=D_n^{-1}\int_{\scr{E}}\prod_{i=1}^n\frac{f(X_i)}{f_0(X_i)}d\Pi(f),
\]
where $D_n$ is as defined in (\ref{eq:defDn}). By lemma \ref{lem:KLn} there
exist positive constants $c_1$ and $c_2$ such that $\PP_0(\scr{D}_n) = o(1)$, where  $\scr{D}_n=\{D_n< c_1 e^{-c_2 n\epsilon_n^2}\}$.
Let $C>0$. Define $\eta_n=n^{-2/9}(\log n)^\beta$, $B_n(x)=\{f\in\mathcal{F}_n \colon  \mid f(x)-f_0(x)\mid>C\eta_n\}$ and consider (test-) functions $\Phi_{n} : \mathbb{R} \rightarrow [0,1]$.
We bound
 \begin{align}\label{eq:bound-Bn2}
\EE_0\Pi(&B_{n}(x) \mid X^n) \nonumber \\
&=\EE_0\Pi(B_{n}(x)\mid X^n)1_{\scr{D}_n}+\EE_0\Pi(B_{n}(x)\mid X^n)1_{\scr{D}^c_n}\Phi_{n}(x)\nonumber\\
&\qquad \qquad\qquad\qquad \qquad\qquad \qquad  +\EE_0\Pi(B_{n}(x)\mid X^n)1_{\scr{D}^c_n}(1-\Phi_{n}(x))\nonumber  \\
&\le \EE_0\left[1_{\scr{D}_n}\right]+\EE_0(\Phi_{n}(x))+\EE_0\left[ D_n^{-1}\int_{B_{n}(x)}\prod_{i=1}^n\frac{f(X_i)}{f_0(X_i)}(1-\Phi_{n}(x))d\Pi(f)1_{\scr{D}^c_n}\right] \nonumber \\
&\le  \PP_0\left(\scr{D}_n\right)+\EE_0(\Phi_{n}(x))+c_1^{-1} e^{c_2 n\epsilon_n^2}\EE_0 \int_{B_{n}(x)}\prod_{i=1}^n\frac{f(X_i)}{f_0(X_i)}(1-\Phi_{n}(x))d\Pi(f)\nonumber  \\
&= o(1)+\EE_0(\Phi_{n}(x))+c_1^{-1}e^{c_2 n\epsilon_n^2}\int_{B_{n}(x)}\EE_f(1-\Phi_{n}(x))d\Pi(f).
\end{align}

To construct the specific test functions $\Phi_{n}(x)$, we distinguish between $x>0$ and $x=0$. For case $x>0$,
it follows from the  proofs of theorems 3 and 5 in \cit{Salomond} that there exists a sequence test functions such that
\begin{align*}
\EE_0\, \Phi_{n}(x) &=o(1)\\
\sup_{f \in B_{n}(x)}\EE_f (1-\Phi_{n}(x)) &\le e^{-C'n(C\eta_n)^3}= e^{-C'C^3n\epsilon_n^2}.
\end{align*}
for some constant $C'>0$. Substituting these bounds into \eqref{eq:bound-Bn2} and choosing $C>(c_2/C')^{1/3}$ shows that $\EE_0\Pi(B_{n}(x)  \mid X^n)\to 0$ as $n\to \infty$.
This finishes the proof for $x>0$.

We now consider the case $x=0$. 
Define subsets
\begin{align*}
B_{n}^+(0)& =\{f\in\mathcal{F}_n\colon  f(0)-f_0(0)>C\eta_n\}\\
B_{n}^-(0)& =\{f\in\mathcal{F}_n\colon f(0)-f_0(0)<-C\eta_n\}.
\end{align*}
As $B_{n}(0)=B_{n}^+(0)\cup B_{n}^-(0)$, $\Pi(B_{n}(0)\mid X^n) \le \Pi(B_{n}^+(0)\mid X^n) + \Pi(B_{n}^-(0)\mid X^n)$.
For bounding $\EE_0\Pi( B_{n}^-(0)\mid X^n)$, use the same test function defined in \cit{Salomond}. 
Then it follows from the inequalities in \eqref{eq:bound-Bn2}, applied with $B_{n}^{-}(0)$ instead of $B_{n}(x)$, that
$\EE_0\Pi( B_{n}^-(0)\mid X^n)=o(1)$ as $n\to\infty$.

For bounding $\EE_0 \Pi(B_{n}^+(0)\mid X^n)$, we also use the inequalities in \eqref{eq:bound-Bn2}, applied with $B_{n}^{+}(0)$ instead of $B_{n}(x)$. However, we also intersect with the event
 \[ A_n = \{f\colon L_1(f,f_0) \le C \eps_n (\log n)^{1/\tau}\} \]
to obtain
\[ \EE_0\Pi(B_{n}^+(0) \mid X^n)
\le  o(1)+\EE_0(\Phi_{n}(0))+c_1^{-1} e^{c_2 n\epsilon_n^2}\int_{B_{n}^+(0)\cap A_n}\EE_f(1-\Phi_{n}(0))d\Pi(f).\]
This holds true since theorem 2 in \cite{Salomond} gives $\Pi(A_n^c \mid X^n) \to 0$, $\PP_0$-almost surely.

Now define
$$\Phi_{n}^+(0)=1\left\{n^{-1}\sum_{i=1}^n 1_{[0,\xi_n]}(X_i)-\int_0^{\xi_n}f_0(t)dt>\tilde{c}_n\right\},$$
where
\begin{equation}\label{eq:xi_n-tildec_n} \xi_n\asymp n^{-2/9} \qquad \text{and} \qquad \tilde{c}_n = C\xi_n\eta_n/3\asymp n^{-4/9}(\log n)^\beta.\end{equation}
By Bernstein's inequality (\cit{AWVaart}, lemma 19.32),

  $$\EE_0\, \Phi_{n}^+(0) \le 2\exp\left(-\frac14 \frac{n\tilde{c}_n^2}{M\xi_n+\tilde{c}_n}\right)
  =o(1).$$
Here we bound the second moment of $1_{[0,\xi_n]}(X_i)$ under $\PP_0$ by $f_0(0)\xi_n$  and use that   $f_0(0)\le M$.

It remains to bound
\[ I:=e^{c_2 n\epsilon_n^2}\int_{B_{n2}^+(0)\cap A_n}\EE_f(1-\Phi^+_{n}(0))d\Pi(f). \]
Since both $f$ and $f_0$ are nonincreasing we have
\[ \int_0^{\xi_n} (f(t)-f_0(t)) \dd t \ge (f(\xi_n)-f_0(0))\xi_n. \]
Hence
\begin{align*}
	\int_0^{\xi_n} f_0(t) \dd t &\le \int_0^{\xi_n} f(t) \dd t +(f_0(0)-f(\xi_n)) \xi_n \\ & \le \int_0^{\xi_n} f(t) \dd t + \xi_n (f_0(0) -f(0)+M_n\xi_n),
\end{align*}
the final inequality being a consequence of $f\in \mathcal{F}_n$.
 Since for $f\in B_{n}^+(0)$ we have $f_0(0)-f(0)\le -C\eta_n$ we get
\[ \int_0^{\xi_n} f_0(t) \le \int_0^{\xi_n} f(t) \dd t +\xi_n (M_n\xi_n - C\eta_n). \]
Using the derived bound we see that
\[ I_2 \le e^{c_2 n\epsilon_n^2}  \int_{B_{n}^+(0) \cap A_n} \PP_f\left(\sqrt{n}\left(\frac1{n} \sum_{i=1}^n \ind_{[0,\xi_n]}(X_i)-\int_0^{\xi_n} f(t) \dd t\right) \le - v_n\right) \dd \Pi(f),\]
where
\begin{equation}\label{eq:v_n} v_n = -\sqrt{n} \left(\tilde{c}_n + \xi_n (M_n\xi_n - C\eta_n)\right). \end{equation}
Note that $M_n\xi_n\asymp\eta_n$, by choice of $M_n, \xi_n$. Taking $C$ big enough such that $M_n\xi_n \le C\eta_n/3$ we have
 $v_n \ge C \sqrt{n} \eta_n \xi_n/3$
 is positive (recall that $\tilde{c}_n$ is defined in \eqref{eq:xi_n-tildec_n}). Using that $f$ is nonincreasing and that $f\in A_n$ we get
\begin{align*} \EE_f \ind_{[0,\xi_n]}(X_1)&= \int_0^{\xi_n}f(t) \dd t\le \| f_0-f\|_{1}+\xi_n f_0(0) \\&\le C\epsilon_n (\log n)^{1/\tau}+M\xi_n \le 2M\xi_n. \end{align*}
 Bernstein's inequality gives
\[ I\le 2 e^{c_2 n\epsilon_n^2} \exp\left(-\frac14 \frac{v_n^2}{2M\xi_n + v_n/\sqrt{n}}\right). \]
If we take  $\eta_n = n^{-2/9}(\log n)^\beta$ , then
\[  \frac{v_n^2}{2M\xi_n + v_n/\sqrt{n}} \gtrsim n^{1/3} (\log n)^{2\beta}.
 \]

This tends to infinity faster than $n\eps_n^2 = n^{1/3}(\log n)^{2/3}$ whenever $2\beta>2/3$, i.e.\ when $\beta >1/3$.
\end{proof}

\begin{rem}
The derived rate is not the optimal but cannot be easily improved upon with the present type of proof. At first sight, one may wonder whether the tests $\Phi_n^+(0)$ can be improved upon by choosing different sequences $\{\tilde{c}_n\}$ and $M_n, \xi_n$. Unfortunately, the choice of $\xi_n$ and $M_n$ cannot be much improved upon. To see this, for bounding $I$ with Bernstein's inequality we need that $v_n$ in \eqref{eq:v_n} is positive. Assume $\xi_n=n^{-\beta_1}$ and  $\eta_n=n^{-\beta_2}$ (up to $\log n$ factors), we must have $\beta_1\ge \beta_2$.
Hence this restriction leads to $v_n\asymp -\sqrt{n}(\tilde c_n+\xi_n \eta_n)$.

Define $b_n=\max(\eps_n(\log n)^{1/\tau},\xi_n)$. Then $ \EE_f \ind_{[0,\xi_n]}(X_1)\lesssim b_n$, we can bound $I$ by
\[2\exp\left(c_2 n^{1/3}(\log n)^{2/3}-\frac14 \frac{v_n^2}{b_n + v_n/\sqrt{n}}\right).\]
We have two cases according to sequence $b_n$.
\begin{enumerate}
\item $b_n=\xi_n$, implies $\beta_1\le 1/3$. We have $\frac{v_n^2}{b_n + v_n/\sqrt{n}}\asymp n\xi_n\eta_n^2=n^{1-\beta_1-2\beta_2}$
should tend to infinity faster than $n^{1/3}$, hence $\beta_1+2\beta_2\le 2/3$. By combine all restrictions, we derive that $\beta_2$ necessarily has to satisfy $1/6\le \beta_2\le 2/9$.
\item $b_n=\eps_n(\log n)^{1/\tau}$, implies $\beta_1> 1/3$. Then $\frac{v_n^2}{b_n + v_n/\sqrt{n}}\asymp n^{4/3}(\xi_n\eta_n)^2=n^{4/3-2\beta_1-2\beta_2}\ge n^{1/3}$ gives $\beta_1+\beta_2\le 1/2$. Hence $\beta_2<1/6$.
\end{enumerate}
Therefore,  $\eta_n$ can not go to zero faster than $n^{-2/9}(\log n)^\beta$. 

\end{rem}

\begin{rem}\label{rem:median_mean}
As pointwise consistency is proved in Theorem \ref{th:centralth}, Theorem 4 in \cite{Salomond} implies that  the posterior median is a consistent estimator at any fixed point. Moreover, the posterior median has the same converge rate $n^{2/9}(\log n)^\beta$. The consistency of the posterior mean is not clear now. However, the posterior mean of $f$ is a decreasing density function, which provides a convenient way for estimation. We use either mean or median estimator according to different purpose in the simulation study.
\end{rem}


\subsection{A difficulty in the proof of theorem 4 in \cite{Salomond}}
The construction of the tests $\{\Phi_n^+(0)\}$ in the proof of theorem \ref{th:centralth} is new. In \cite{Salomond} a different argument is used, which we now shortly review (it is given in section 3.3 of that paper).
First we give a lemma for the following discussion.
\begin{lem}\label{lem:bound-f_G}
 Let  $\Pi$ be the prior distribution on $\scr{F}$ that is obtained via (\ref{eq:scalemix}), where  $G \sim DP(G_0, \alpha)$ and $G_0$ satisfies
 there exists positive numbers  $\th_0, \overline{a}, \overline{k}$ such that $$g_0(\theta) \le \overline{k} e^{-\overline{a}/\theta} \quad \mbox{for all} \quad \th \in (0,\theta_0).$$
 Then for any $x$ (possibly sequence) in $(0,\th_0)$,
\[	\Pi\left(\{f:f(0)-f(x)\ge A\}\right) \le \frac{\overline{k}}{\overline{a} A} x e^{-\overline{a}/x}\quad\text{ for every }\,\, A>0.\]
\end{lem}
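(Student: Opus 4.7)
The plan is to exploit the scale-mixture representation to rewrite $f_G(0)-f_G(x)$ as a single linear functional of $G$, and then apply Markov's inequality together with the standard Dirichlet-process identity $\EE_{DP(G_0,\alpha)}\!\left[\int h\,\dd G\right]=\int h\,\dd G_0$.

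First I would note that, for $G$ supported on $(0,\infty)$,
\[ f_G(0)-f_G(x)=\int_0^\infty \theta^{-1}\dd G(\theta)-\int_x^\infty \theta^{-1}\dd G(\theta)=\int_0^x \theta^{-1}\dd G(\theta). \]
This is a nonnegative random variable, so Markov's inequality gives
\[ \Pi\bigl(f_G(0)-f_G(x)\ge A\bigr)\le \frac{1}{A}\,\EE_\Pi\!\left[\int_0^x \theta^{-1}\dd G(\theta)\right]. \]
Applying the Dirichlet identity to the integrand $h(\theta)=\theta^{-1}\ind_{(0,x)}(\theta)$ and invoking the assumed upper bound on $g_0$ (valid on $(0,\theta_0)\supset(0,x)$) yields
\[ \EE_\Pi\!\left[\int_0^x \theta^{-1}\dd G(\theta)\right]=\int_0^x \theta^{-1}g_0(\theta)\dd\theta \le \overline{k}\int_0^x \theta^{-1}e^{-\overline{a}/\theta}\dd\theta. \]

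It then remains to establish the elementary estimate $\int_0^x \theta^{-1}e^{-\overline{a}/\theta}\dd\theta\le (x/\overline{a})\,e^{-\overline{a}/x}$. I would prove this by integration by parts, using the observation $\theta^{-1}e^{-\overline{a}/\theta}=(\theta/\overline{a})\,\frac{\dd}{\dd\theta}e^{-\overline{a}/\theta}$, which gives
\[ \int_0^x \theta^{-1}e^{-\overline{a}/\theta}\dd\theta=\frac{x}{\overline{a}}e^{-\overline{a}/x}-\frac{1}{\overline{a}}\int_0^x e^{-\overline{a}/\theta}\dd\theta \le \frac{x}{\overline{a}}e^{-\overline{a}/x}, \]
the boundary term at $0$ vanishing since $\theta e^{-\overline{a}/\theta}\to 0$ as $\theta\downarrow 0$, and the remaining integral being dropped because it is nonnegative. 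Chaining the three displays produces the asserted bound.

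The argument is essentially a one-line computation once the representation $f_G(0)-f_G(x)=\int_0^x \theta^{-1}\dd G(\theta)$ has been identified, and there is no real obstacle; the only detail warranting care is the tail estimate on the exponential-type integral, which the integration-by-parts trick dispatches without introducing any further unwanted constants.
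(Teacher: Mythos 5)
Your proof is correct and takes essentially the same route as the paper: rewrite $f_G(0)-f_G(x)=\int_0^x\theta^{-1}\dd G(\theta)$, apply Markov's inequality combined with the Dirichlet-process mean identity, and then bound $\int_0^x\theta^{-1}e^{-\overline{a}/\theta}\dd\theta$. The only difference is the final calculus step, where the paper substitutes $u=1/\theta$ and bounds $u^{-1}\le x$ on the range of integration while you use integration by parts; both dispatch the tail integral with the same constant.
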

\begin{proof}
	By the mixture representation of decreasing function $f$, (\ref{eq:scalemix}), and Markov's inequality we have
\[ \Pi\left(\{f:f(0)-f(x)\ge A\}\right) = \Pi\left(\int_0^x\th^{-1}dG(\th)\ge A\right)\le
A^{-1}  \int_0^{x}\th^{-1}g_0(\th)d\th. \]
By assumption \ref{ass:base} this is bounded by
\begin{align*} \overline{k}A^{-1}\int_0^{x}\th^{-1}e^{-\overline{a}/\th}d\th&= \overline{k} A^{-1}\int_{1/x}^\infty u^{-1}e^{-\overline{a}u}du\\ &
\le\overline{k}A^{-1}x\int_{1/x}^\infty e^{-\overline{a}u}du
=\overline{k}(\overline{a} A)^{-1} x e^{-\overline{a}/x}.
\end{align*}
\end{proof}

Let $\{h_n\}$ be a sequence of positive numbers. Trivially, we have
\[ f(0)-f_0(0) = f(0)- f(h_n) + f(h_n)- f_0(0). \]
Since both $f$ and $f_0$ are nonincreasing, $f(h_n)\le f(x)$ and $f_0(0) \ge f_0(x)$, for all $x\in [0,h_n]$. Hence,
\[ f(0)-f_0(0) \le  f(0)- f(h_n) + f(x)- f_0(x),\qquad \text{for all}\quad x\in [0,h_n]. \]	
This implies
	\[ f(0)-f_0(0) \le  f(0)- f(h_n) + h^{-1}_n L_1(f,f_0). \]
Using this bound and define a new sequence $\tilde{\eta}_n$, we get
\begin{equation}\label{eq:arg-zero}
\begin{split} \EE_0 \Pi\left( f(0)-f_0(0) > C \tilde\eta_n \mid X^n\right) &\le  \EE_0 \Pi\left( f(0)-f(h_n) > C \tilde\eta_n/2 \mid X^n\right)\\& + \EE_0 \Pi\left( L_1(f,f_0) > C \tilde\eta_n h_n/2 \mid X^n\right). \end{split}
\end{equation}
Choose $\tilde\eta_n$ and $h_n$ such that $\tilde\eta_n h_n = 2 \eps_n$. Theorem 1 in \cite{Salomond}  implies that the second term on the right-hand-side tends to zero. We aim to choose $\tilde\eta_n$ such that the first  term on the right-hand-side in \eqref{eq:arg-zero} also tends to zero. This term can be dealt with using lemma \ref{lem:KLn}:
\begin{align*}	\EE_0 \Pi\left( f(0)-f(h_n) > C \tilde\eta_n/2 \mid X^n\right)& \le \PP_0(\scr{D}_n) + c_1^{-1}e^{c_2 n \eps_n^2}  \Pi\left( f(0)-f(h_n) > C \tilde\eta_n/2\right)\\ & = o(1) + c_1^{-1}e^{c_2 n \eps_n^2}  \Pi\left( f(0)-f(h_n) > C \tilde\eta_n/2\right).  \end{align*}
Using lemma \ref{lem:bound-f_G}, the second term on the right-hand-side can be bounded by
\[    \frac{2\overline{k}}{\overline{a}c_1 C} \frac{h_n}{\tilde\eta_n} e^{c_2 n \eps_n^2-\overline{a} h_n^{-1}} \asymp \frac{h_n^2}{\eps_n} e^{c_2 n \eps_n^2-\overline{a} h_n^{-1}}\]
Since $n\eps_n^2 = n^{1/3} (\log n)^{2/3}$, the right-hand-side in the preceding display tends to zero ($n\to \infty$) upon choosing $h_n^{-1} \asymp n^{1/3}  (\log n)^\beta$ and $\beta>2/3$. This yields
\[ \tilde\eta_n \asymp \eps_n h_n^{-1} \asymp (\log n)^{\beta +1/3}, \]
which unfortunately does not tend to zero. Hence, we do not see how the presented argument can yield pointwise consistency of the posterior at zero.

\subsection{Attempt to fix the proof by adjusting the condition on the base measure}\label{subsec:adj}
A natural attempt to fix the argument consists of changing the condition on the base measure.
If the assumption on $g_0$ would be replaced with
\begin{equation}\label{eq:ch-priormass} \underline{k} e^{-\underline{a}/\th^\ga} \le g_0(\theta) \le \overline{k} e^{-\overline{a}/\theta^\ga} \quad \mbox{for all} \quad \th \in (0,\theta_0),\end{equation} then lemma \ref{lem:bound-f_G} would give the bound
\[	\Pi\left(\{f:f(0)-f(x)\ge A\}\right) \le \frac{\overline{k}}{\overline{a} A} x e^{-\overline{a}/x^\ga}.\]
Now we can repeat the argument and check whether it is possible to choose $\ga$ and $\{h_n\}$ such that
both $\tilde\eta_n \to 0$ and
\begin{equation}\label{eq:ch-otherprior} \frac{h_n^2}{\eps_n} e^{c n \eps_n^2-\overline{a} h_n^{-\ga}} = o(1) \end{equation}
hold true simultaneously. The requirement $\tilde\eta_n \to 0$ leads to taking $h_n = n^{-1/3} (\log n)^{\tilde\beta}$, with $\tilde\beta >1/3$. With this choice for $h_n$, equation \eqref{eq:ch-otherprior} can only be satisfied if $\ga >1$. Now if we assume \eqref{eq:ch-priormass} with $\ga>1$, then we need to check whether lemma \ref{lem:KLn} is still valid.
This is a delicate  issue  as we need to trace back in which steps of its proof the assumption on the base measure is used. In appendix B of \cite{Salomond} it is shown that the result in lemma \ref{lem:KLn} follows upon proving that
\begin{equation}\label{eq:kln} \Pi(\scr{S}_n) \ge \exp\left(-c_1 n \eps_n^2\right), \end{equation}
with $\eps_n=(\log n/n)^{1/3}$ (as in the statement of the lemma). Here, the set $\scr{S}_n$ is defined as
\[   \scr{S}_n=\left\{f:KL(f_{0,n},f_n)\le\epsilon_n^2,\int f_{0,n}(x)\left(\log\frac{f(x)}{f_0(x)}\right)^2dx\le\epsilon_n^2,\int_0^{\th_n} f(x)dx\ge 1-\epsilon_n^2\right\},\]
where
$$  \th_n=F_0^{-1}(1-\epsilon_n/(2n)), \qquad f_n(\cdot)=\frac{f(\cdot)I_{[0,\theta_n]}(\cdot)}{F(\th_n)}, \qquad  f_{0,n}(\cdot)=\frac{f_0(\cdot)I_{[0,\theta_n]}(\cdot)}{F_0(\th_n)}.$$
In lemma 8 of \cite{Salomond} it is proved that $\Pi(\scr{S}_n) \gtrsim \exp\left( -C_1 \eps_n^{-1} \log \eps_n\right)$ for some constant $C_1>0$, which implies the specific rate $\eps_n$. The proof of this lemma is rather complicated, the key being to establish the existence of a set  $\scr{N}_n \subset \scr{S}_n$ for which
 $\Pi(\scr{N}_n) \gtrsim \exp\left( -C_1 \eps_n^{-1} \log \eps_n\right)$.  Next, upon tracking down at which place the prior mass condition is used for that result (see appendix \ref{aped:kln}), we find  that it needs to be such that
 \begin{equation}\label{eq:required} \sum_{i=1}^{m_n} \log G_0(U_i) \gtrsim \eps_n^{-1} \log \eps_n \end{equation}
where $m_n \asymp \eps_n^{-1}$ and $U_i = (i\eps_n, (i+1) \eps_n]$ (see in particular inequality \eqref{eq:track-low-mass} in the appendix). Now assume \eqref{eq:ch-priormass}, then
\[ G_0(U_i) \ge \underline{k} \int_{U_i} e^{-\underline{a}/\th^\ga} \dd \th \ge \underline{k} \eps_n \exp\left(-\underline{a} (i\eps_n)^{-\ga}\right) \]
Hence
\begin{align*}  \sum_{i=1}^{m_n} \log G_0(U_i) &\gtrsim \log \underline{k}+ \eps_n^{-1}\log \eps_n -\underline{a} \sum_{i=1}^n (i\eps_n)^{-\ga} \\&\gtrsim \log \underline{k}+ \eps_n^{-1}\log \eps_n -\eps_n^{-\ga}, \end{align*}
if $\ga>1$ (which we need to assume for \eqref{eq:ch-otherprior} to hold). From this inequality we see that \eqref{eq:required} can only be satisfied if $\ga \in (0,1]$. We conclude that with the line of proof in \cite{Salomond} the outlined problem in the proof of consistency near zero cannot be fixed by adjusting the prior to \eqref{eq:ch-priormass}: one inequality requires $\ga>1$, while another inequality requires $\ga \in (0,1]$ and these inequalities need to hold true jointly.



\section{Gibbs Sampling in the DPM model}
\label{sec:alg}
Since a decreasing density can be represented as a scale mixture of uniform densities (see (\ref{eq:scalemix})) and the mixing measure is chosen according to a Dirichlet process, the model is a special instance of a so-called Dirichlet Process Mixture (DPM) Model. Algorithms for drawing from the posterior in such models have been studied in many papers over the past two decades, a key reference being \cit{Neal}. Here we shortly discuss the algorithm coined ``algorithm 2" in that paper. We  assume $G_0$ has a density $g_0$ with respect to Lebesgue measure.

Let $\#(x)$ denote the number of distinct values in the vector $x$ and let $x_{-i}$ denote the vector obtained by removing the $i$-th element of $x$. Denote by $\vee(x)$ and $\wedge(x)$ the  maximum and minimum of all elements in the vector $x$ respectively.

The starting point for the algorithm is a construction to sample from the DPM model:
\begin{equation}
\label{eq:mod2}
\begin{split}
 Z:=(Z_1,\ldots, Z_n) &\sim CRP(\alpha)\\
\Th_1,\ldots, \Th_{\#(Z)} &\simiid G_0 \\
X_1,\ldots, X_n \mid \Th_1,\ldots, \Th_{\#(Z)}, Z_1,\dots, Z_n& \simind Unif(0, \Th_{Z_i}).
\end{split}
\end{equation}
Here  CRP$(\alpha)$ denotes the ``Chinese Restaurant Process'' prior, which is a distribution on the set of partitions of the integers $\{1,2,\ldots,n\}$. This distribution is most easily described in a recursive way. Initialize by setting $Z_1=1$. Next, given $Z_1,\dots, Z_i$, let $L_i=\#(Z_1,\ldots, Z_{i})$ and  set
\[
Z_{i+1}=\begin{cases} L_i+1	 & \text{with probability}\: \alpha/(i+\alpha)\\
k 	 & \text{with probability}\:N_k/(i+\alpha). \end{cases}
\]
where $k$ varies over $\{1,\ldots, L_i\}$ and  $N_k = \sum_{j=1}^i \ind\{Z_j=k\}$ is the number of current $Z_j$'s equal to $k$. In principle this process can be continued indefinitely, but for our purposes it ends after $n$ steps.
One can interpret  the vector $Z$ as a partitioning of  the index set $\{1,\ldots,n\}$ (and hence the data $X=(X_1,\ldots, X_n)$) into $\#(Z)$ disjoint sets (sometimes called ``clusters'').
For ease of notation, write  $\Theta=(\Theta_1, \ldots, \Theta_{\#(Z)})$.

An algorithm for drawing from the posterior of $(Z, \Th)$   is obtained by successive substitution sampling (also known as  Gibbs sampling), where the following two steps are iterated:
\begin{enumerate}
  \item sample $\Th \mid (X,Z)$;
  \item sample $Z \mid (X,\Th)$.
\end{enumerate}
The first step
entails sampling from the posterior within each cluster. For the $k-$th component of $\Theta$, $\Theta_k$, this means sampling from
\begin{equation}\label{eq:fthk}
f_{\Th_k\mid X,Z}(\theta_k\mid x,z) \propto f_{\Th_k}(\theta_k)\prod_{j:z_j=k}f_{X_j\mid \Th_k}(x_j\mid\theta_k)=
 g_0(\theta_k)\prod_{j:z_j=k}\psi(x_j\mid\theta_k).
\end{equation}

\medskip

Sampling $Z \mid (X,\Theta)$ is done by cycling over all $Z_i$ ($1\le i \le n$) iteratively.  For $i \in \{1,\ldots, n\}$ and $k \in \{1,\ldots, 1+\vee(Z)\}$ we have
\begin{align}
f_{Z_i\mid Z_{-i},X,\Theta}(k\mid z_{-i},x,\theta)
 &\propto f_{X_i\mid Z_i,Z_{-i},\Theta}(x_i\mid k,z_{-i},\theta)f_{Z_i\mid Z_{-i},\Theta}(k\mid z_{-i},\th) \nonumber\\&=f_{X_i \mid \Th_{Z_i}}(x_i \mid \th_{k}) f_{Z_i \mid Z_{-i}}(k\mid z_{-i})	\label{eq:fz}
\end{align}

The right-hand-side of this display equals
\begin{equation}\label{eq:updatestepsZi}
\begin{split}
	\frac{N_{k,-i}}{n-1+\alpha}\psi(x_i \mid \th_k) \qquad \qquad   & \text{if}\quad 1\le k \le \vee(Z),
	\\
	 \frac{\alpha}{n-1+\alpha} \int \psi(x_i \mid \th) d G_0(\th)  \qquad \qquad & \text{if}\quad k =1+ \vee(Z),
\end{split}
\end{equation}
where $N_{k,-i}=\sum_{j\in \{1,\ldots,n\}\setminus \{i\}} \ind\{Z_j =k\}$.  The expression for $k=1+\vee(Z)$ follows since in that case sampling from $X_i \mid \Theta_k$ boils down to sampling from the marginal distribution of $X_i$. Summarising, we have the following algorithm:
\begin{algorithm}
	\begin{algorithmic}
		\caption{Gibbs Sampling in DPM model}
		\State Initialise $Z,\Th$.
		\For {each iteration} 
		\For {$i=1,2,\dots,n$} 
		\State Update $Z_i$ according to \eqref{eq:fz},
		\State That is, set $Z_i$ equal to $k$ with probabilities proportional to those given in \eqref{eq:updatestepsZi}. 
		\EndFor
		\For {$k=1,\dots,\#(Z)$} 
		\State Update $\Theta_k$ by sampling from the density in  \eqref{eq:fthk}.  
		\EndFor
		\EndFor
		
	\end{algorithmic}
\end{algorithm}

It may happen that over subsequent iterations of the Gibbs sampler certain clusters disappear. Then $\#(Z)$ and $\vee(Z)$ will  not be the same. If this happens, the $\Th_j$ corresponding to the disappearing cluster is understood to be removed from the vector $\Th$ (because the cluster becomes ``empty'', the prior and posterior distribution of such a $\Th_j$ are equal). The precise labels do not have a specific meaning and are only used to specify the partitioning into clusters.

In this step we need to evaluate $\int \psi(x_i \mid \th) d G_0(\th)$. One option is to numerically evaluate this quantity for $i=1,\ldots, n$ (it only needs to be evaluated once). Alternatively, the ``no-gaps'' algorithm of \cit{mac-muller} or ``algorithm 8'' of \cit{Neal} can be used and refer for further details to these papers.

\section{Review of existing methods for estimating the decreasing density at zero}
\label{sec:methods}
In this section we review some  consistent estimators for a decreasing density $f_0$ at zero that have appeared in the literature. These will be compared with the Bayesian method of this paper  using a simulation study in section \ref{sec:simul}.

\subsection{Maximum penalised likelihood}\label{sec:WS}
In \cit{WoodroofeSun}, the maximum penalised likelihood estimator is defined as the maximiser of the following penalised log likelihood function:
$$
\ell_{\alpha}(f)=\sum_{i=1}^n\log f(X_i)-\alpha n f(0).
$$
Here $\alpha\ge0$ is a (small) penalty parameter. This estimator has the same form as the maximum likelihood estimator (MLE), being piecewise constant with at most $n$ discontinuities.
For fixed $\alpha\ge0$, for ease of notation here let $x_1<\cdots<x_n<\infty$ denote the ordered observed values and
$$w_0=0 \quad \text{and}\quad w_k=\alpha+\gamma x_k, \quad k=1,\dots,n$$
where $\gamma$ is the unique solution of the equation
\[\gamma=\min_{1\le s\le n}\left\{1-\frac{\alpha s/n}{\alpha+\gamma x_s}\right\}.\]

Denote by $f^P(\alpha,\cdot)$ the penalized estimator with penalty parameter $\alpha$. Taking $\alpha<x_n$,  $f^{P}(\alpha,\cdot)$ is a step function with
$$f^{P}(\alpha,x)=f^{P}(\alpha,x_k),\quad \forall x_{k-1}<x\le x_k, \quad\forall k=1,\dots,n.$$
At zero it is defined by right continuity and for $x\not\in[0,x_n]$ as $f^{P}(\alpha,x)=0$. Here
$$f^{P}(\alpha,x_k)=\min_{0\le i<k}\max_{k\le j\le n}\frac{(j-i)/n}{w_j-w_i}.$$
Geometrically, for $k=1,2,\ldots,n$, $f^{P}(\alpha,x_k)$ is the left derivative of the least concave majorant of the empirical distribution function of the transformed data $w_i, i=1,\dots,n$ evaluated at $w_k$. Note that an alternative expression for $f^{P}(\alpha,0)$ is $(1-\gamma)/\alpha$ which can be easily calculated.

Theorem {\bf 4} in \cit{WoodroofeSun} states that
\[n^{1/3}\{f^P(\alpha_n,0)-f_0(0)\}\Rightarrow^d \sup_{t>0}\frac{W(t)-(c+\beta t^2)}{t}\]
where $\alpha_n=cn^{-2/3}$, $\beta=-f_0(0)f'_0(0)/2$ and $W(t)$ denotes the standard Brownian motion. In \cit{WoodroofeSun}, the theoretically optimal constant $c$ is determined by minimizing the expected absolute value of the limiting distribution $f^P$, resulting in $c=0.649\cdot\beta^{-1/3}$.

\subsection{Simple and `adaptive' estimators}
\label{sec:KulLop}
In \cit{Kulikov}, $f_0(0)$ is estimated by the maximum likelihood estimator $\hat{f}_n$ evaluated at a small positive (but vanishing) number: $\hat{f}_n(cn^{-1/3})$ for some $c>0$.  Of course, the estimator depends on the choice of the parameter $c$.

In \cit{Kulikov}, Theorem 3.1, it is shown that
\[A_{21}\left\{n^{1/3}(\hat{f}_n(cB_{21}n^{-1/3})-f_0(cB_{21}n^{-1/3}))+cB_{21}f'_0(0)\right\}\]
converges in distribution to $D_{R}[W(t)-t^2](c)$  when $n\to\infty$. Here $D_{R}[Z(t)](c)$ is the right derivative of the least concave majorant on $[0,\infty)$ of the process $Z(t)$, evaluated at $c$. Furthermore, $B_{21}=4^{1/3}f_0(0)^{1/3}|f'_0(0)|^{-2/3}$ and $A_{21}=\sqrt{B_{21}/f_0(0)}$.

Based on this asymptotic result, two estimators are proposed, denoted as $f^S$ and $f^A$ (`S' for simple, `A' for adaptive). The first is a simple one with $cB_{21}=1$, then $f^S(0)=\hat{f}_n(n^{-1/3})$. The second is $f^A(0)=\hat{f}_n(c^*B_{21}n^{-1/3})$, where  $c^*\approx 0.345$ is taken such that the the second moment of the limiting distribution is minimized. Of course, to really turn this into an estimator, $B_{21}$ has to be estimated. Details on this are presented in section \ref{sec:simulstud}.

\subsection{Histogram estimator}\label{sec:hist}
In chapter 2 of \cit{GroJoCUP} a natural and simple histogram-type estimator for $f_0(0)$ is proposed. Let $\{b_n\}$ be a vanishing sequence of positive numbers and consider the estimator $f^H(0)=b_n^{-1}\mathbb{F}_n(b_n)$, where $\mathbb{F}_n$ is the empirical distribution of $X_1,\dots,X_n$. It can be shown that $\mbox{E} f^{H}(0)-f_0(0)$ behaves like $b_n f'_0(0)/2$ and the variance of $f^{H}(0)$ behaves like $f_0(0)/(nb_n)$ as $n\to\infty$. Then the asymptotic mean square error (aMSE) optimal choice for $b_n$ is $(2f_0(0)/f'_0(0)^2)^{1/3}n^{-1/3}=2^{-1/3}B_{21}n^{-1/3}$, where $B_{21}$ is as defined in the Section \ref{sec:KulLop}.

\section{Numerical illlustrations}
\label{sec:simul}

In this section we use the algorithm described in Section \ref{sec:alg} to sample from  the posterior distribution. We consider two data generating settings for the true density function: the standard Exponential distribution and the half-Normal distribution. Both densities are bounded, decreasing and satisfy assumption \ref{ass:f_0}. Suppose in the $j$-th iteration of the Gibbs sampler (possibly after discarding ``burn in'' samples) we have obtained $\left(\Th^{(j)}_{Z_1}, \ldots, \Th^{(j)}_{Z_n}\right).$
At iteration $j$, if the stationary region of the mcmc sampler has been reached, a sample from the posterior distribution is given by
\begin{equation}\label{eq:sampler}\hat{f}^{(j)}(x):= \frac1{n} \sum_{i=1}^n \psi_x\big(\Th^{(j)}_{Z_i}\big).\end{equation}
Two natural derived Bayesian point estimators are the posterior mean and the median.
Assuming $J$ iterations, a Rao-Blackwellized estimator for the posterior mean is obtained by computing $J^{-1}\sum_{j=1}^J \hat{f}^{(j)}(x)$ and an estimator for the posterior median at $x$ is the median value in $\{\hat{f}^{(j)}(x),\, j=1,\dots,J\}$. 
We implemented our procedures in {\bf Julia}, see \cit{bezanson17}. The computer code and datasets for replication of our examples forms part of the {\sf BayesianDecreasingDensity} repository (\url{https://github.com/fmeulen/BayesianDecreasingDensity}). For plotting we used functionalities of the {\bf{ggplot2}} package (see \cit{wickham09}) in {\bf R}. The computations were performed on a MacBook Pro, with a 2.7GHz Intel Core i5 with 8 GB RAM.

\subsection{Base measures}\label{subsec:basemeasures} 
To assess the influence of the base measure in the Dirichlet-process prior, we consider the following choices for the base measure:
\begin{enumerate}
 \item[(A)]
  The density  of the base measure vanishes exponentially fast near zero, as the  lower bound of Assumption \ref{ass:base} requires:
  \begin{equation}\label{eq:expont}g_0(\th)\propto e^{-\th-\th^{-1}}\ind_{[0,\infty)}(\th).\end{equation}

 \item[(B)]  The density of the $\mbox{Gamma}(2,1)$ distribution \[  g_0(\th)=\th e^{-\th}\ind_{[0,\infty)}(\th). \] 
 \item[(C)] The density of the $\mbox{Pareto}(\bar\alpha,\tau)$ distribution. That is
 \[ g_0(\th) = \bar\alpha \tau^{\bar\alpha} \th^{-\bar\alpha-1} \ind_{[\tau,\infty)}(\theta). \]
 Here, we consider various choices for the threshold parameter $\tau$.
 \item[(D)] The density is obtained as a mixture of the $\mbox{Pareto}(\bar\alpha,\tau)$  density, where the mixing measure on $\tau$ has the $\mbox{Gamma}(\lambda,\beta)$ distribution. This implies that $g_0(\th) \asymp \th^{\lambda-1}$ for $\th \downarrow 0$.  The parameter $\bar\alpha$ is fixed here, but could be equipped with with a ``hyper'' prior without adding much additional computational complexity.
\end{enumerate}
Note that cases (A), (B), (D)(when $\lambda>1$) satisfy Assumption \ref{ass:base} and case (C) does not.
In cases (A) and (B) the update on the  ``cluster centra'' $\th$
does not boil down to sampling from a ``standard'' distribution. In this case either rejection sampling or a Metropolis-Hastings step can be used, the details of which are  given in section \ref{sec:updating-theta} in the appendix.
In case (C) we have partial conjugacy, which in this case means that the $\th$'s can be sampled from a Pareto distribution.
Finally, case (D) can be dealt with by Gibbs sampling. More precisely, conditional on the current value of $\tau$, the $\th$'s can be sampled from the Pareto distribution just as in  case (C). Next, $\tau$ is sampled conditional on $(\th_1, \th_{\#z})$ from the density
\[ p(\tau \mid \th_1, \th_{\#z}) \propto p(\th_1, \th_{\#z}\mid \tau) p(\tau) \propto \tau^{\lambda + (\#z) \bar\alpha-1}e^{-\beta \tau} \ind\{\tau \le \min(\th_1,\ldots, \th_{\#z})\} \]
(where we use ``Bayesian notation'', to simplify the expressions). Hence, this boils down to sampling from a truncated Gamma distribution.

\subsection{Estimates of the density for two simulated datasets}
\label{sec:prior-influence}

\begin{figure}[!htp]
\begin{center}
	\includegraphics[scale=0.75]{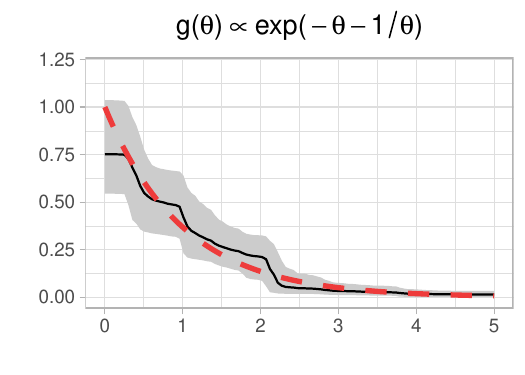}
	\includegraphics[scale=0.75]{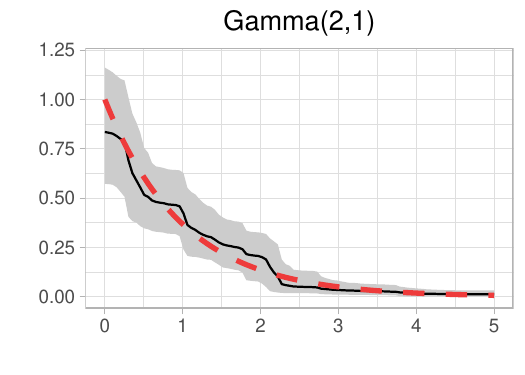}
	\includegraphics[scale=0.75]{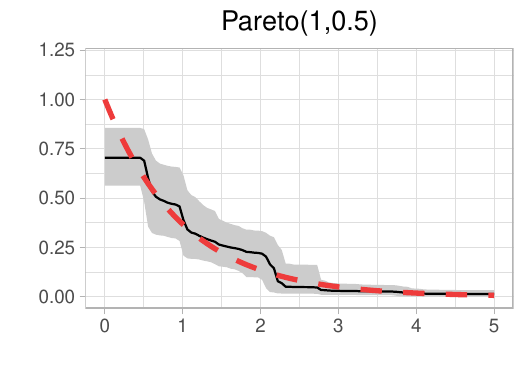}
	\includegraphics[scale=0.75]{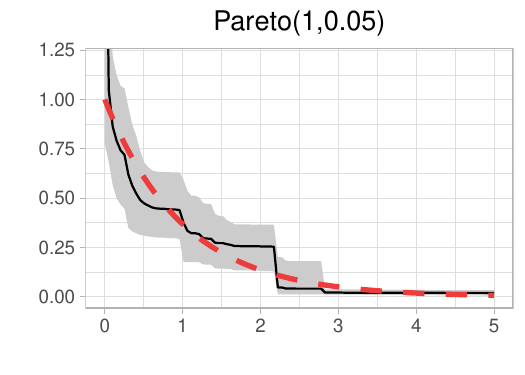}
	\includegraphics[scale=0.75]{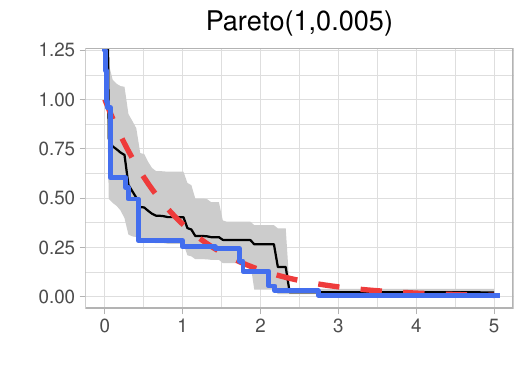}
	\includegraphics[scale=0.75]{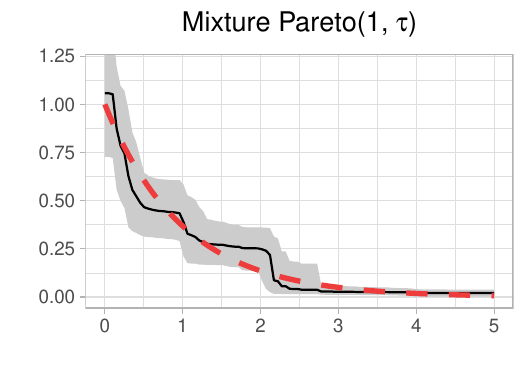}
\end{center}
\caption{In each panel the same dataset was used, which is a sample of size $100$ from the standard Exponential distribution. The black curve is the posterior mean and the shaded grey area depicts pointwise $95\%$	credible intervals. The dashed red curve is the true density.   The title in each of the figures refers to the base measure. In the mixture Pareto case, the mixing measure on $\tau$ was taken to be the $\mbox{Gamma}(2,1)$ distribution. In the lower left figure, the solid blue step-function is the maximum likelihood estimate. The inconsistency of this estimator at zero is clearly visible. Moreover, the figure suggests also inconsistency of the posterior mean when the base measure is taken to be the $\mbox{Pareto}(1,0.005)$ distribution.\label{fig:exp100}}
\end{figure}

\begin{figure}[!htp]
\begin{center}
	\includegraphics[scale=0.75]{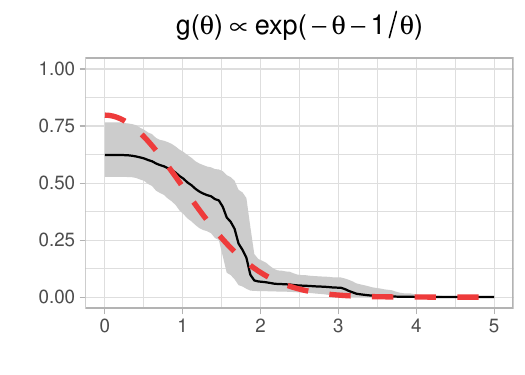}
	\includegraphics[scale=0.75]{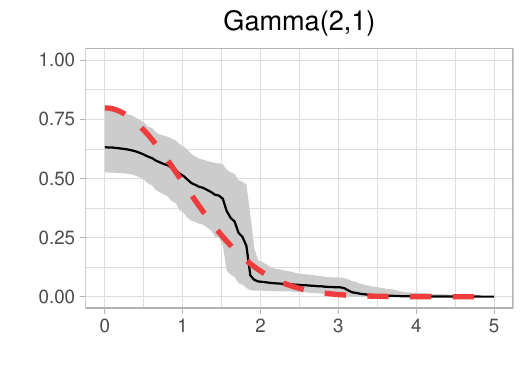}
	\includegraphics[scale=0.75]{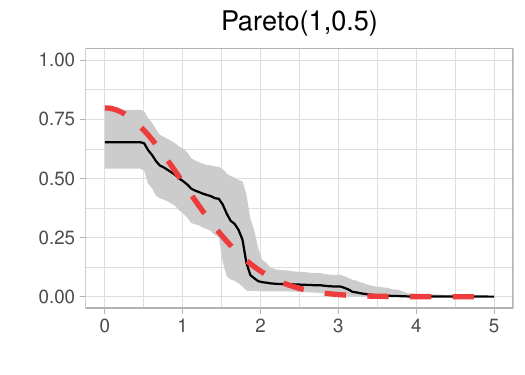}
	\includegraphics[scale=0.75]{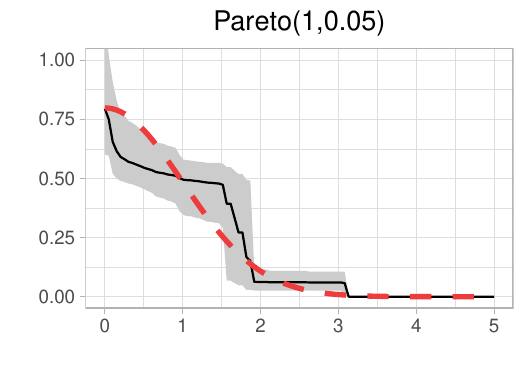}
	\includegraphics[scale=0.75]{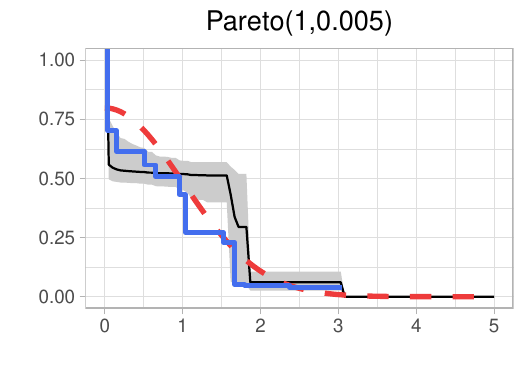}
	\includegraphics[scale=0.75]{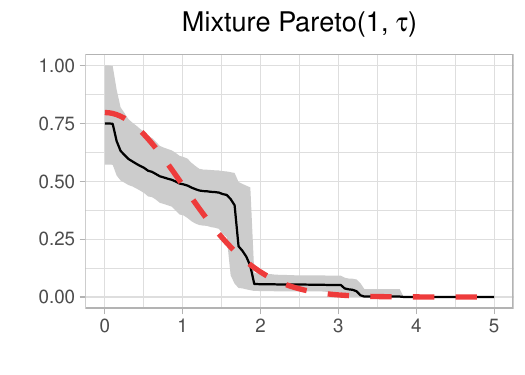}
\end{center}
\caption{Same experiment as in Figure \ref{fig:exp100}, this time with a sample of size $100$ from the halfNormal distribution. \label{fig:halfnormal100}}
\end{figure}

We obtained  datasets of size $100$ by sampling independently from both the standard Exponential distribution and the halfNormal distribution. In the prior specification, the concentration parameter $\alpha$ was fixed to $1$ in all simulations, while the base measure was varied over cases (A), (B), (C) with $\bar\alpha=1$, $\tau \in \{0.005, 0.05, 0.5\}$ and (D) with $\bar\alpha=1$, $\lambda=2$ and $\beta=1$. The algorithm was run  for $50.000$ iterations and the first half of the iterates were discarded as burn in. The computing time was approximately 2 minutes. In case Metropolis-Hastings steps were used for updating $\th$'s, the acceptance rates of the random-walk updates was approximately $0.35$, both in case (A) and (B). The results are displayed in figures \ref{fig:exp100} and \ref{fig:halfnormal100}. From the top figures we see that the posterior mean and pointwise credible bands visually look similar for the choices of base-measure under (A) and (B). If the base measure is chosen according to (C), the middle and bottom-left  figures show the effect of the parameter $\tau$. Choosing $\tau$ too small (here: $0.005$) the posterior mean appears inconsistent at zero, similar as the Grenander estimator which is added to the figure for comparison. For somewhat larger values of $\tau$ (middle-left figure), the estimate near zero is like a histogram estimator. Finally, the bottom-right figure shows the posterior mean under the base measure specification (D). Here, the posterior mean looks comparable as obtained under  (A) and (B), suggesting that we are able to learn the parameter $\tau$ from the data. In fact, whereas the prior mean of $\tau$ equals $2$, the average of the non burn in samples of $\tau$ equals $0.66$.  We have repeated the whole experiment with sample size $1000$. The results are in Appendix \ref{sec:n=1000results_exp1}. 

\subsection{Distribution of the posterior mean for $f(0)$ under various bases measures.}

In this section we compare base measures (A), (B) and (D) for estimating $f$ at zero.  In the experiment, we considered samples of sizes either $50$ or $250$. We computed the posterior mean for $f(0)$ for each sample based on $10,000$ MCMC-iterations, discarding the first half as burnin.  The Monte-Carlo sample size was taken equal to $500$. Figure \ref{fig:simf0} summarises the results. While the density for base measure (D) is slightly more spread, contrary to base measures (A) and (B), it concentrates on  correct values for both the Exponential and HalfNormal distribution.

\begin{figure}[!htp]
\centering
\includegraphics[scale=0.7]{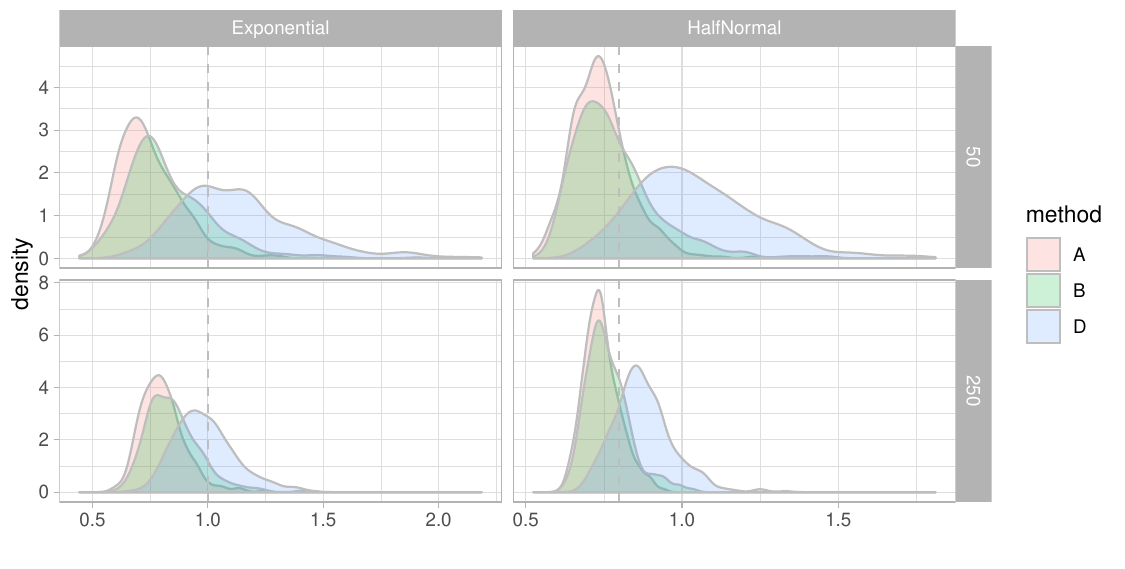}
\caption{Posterior mean estimator for $f(0)$ for sample sizes $50, 250$ in case the true data-generating distribution is either standard Exponential or halfNormal. The posterior mean is computed by taking $10,000$ MCMC-samples and discarding the first $5,000$ as burnin samples. The Monte-Carlo sample size was taken equal to $500$. For the considered sample sizes, only method (D) concentrates around the correct values.  \label{fig:simf0} }
\end{figure}

\subsection{Empirical assessment of the rate of contraction}
We also performed a large scale experiment to empirically assess the rate of contraction of the posterior median at zero, under either choices (A), (B) or (D) for the base measure. Our proof for deriving the contraction rate really requires a base-measure as under (A) and now the underlying idea is to see in a simulation study whether $g_0(\th) \sim \th$ for $\th$ near $0$ is suitable or not.  In the experiment, we first fixed a sample size $n$ and generated $n$ independent realisations from the standard Exponential distribution. We then ran the MCMC sampler for $20.000$ iterations, and kept the final iterate for initialisation of all chains ran for that particular sample size. Next, we repeated $50$ times
\begin{enumerate}
  \item  sample a dataset of size $n$ from the standard Exponential distribution;
  \item run the MCMC algorithm for $2500$ iterations;
  \item compute the median value at zero obtained in those samples.
\end{enumerate}
The Metropolis-Hastings proposals for updating the $\th$'s were tuned such that the  acceptance rate was about $20\%$ in all cases.
If the averages are denoted by $y_1,\ldots, y_{100}$, we finally computed the Root Mean Squared Error, defined by
$ \sqrt{0.02 \sum_{i=1}^{50}(y_i - 1)^2}$. By repeating this experiment for all three choices of base measure and various values of $n$, we obtained figure \ref{fig:rate-comparison_new}. The contraction rate is an asymptotic property, and hence there is definitely uncertainty on which values of $n$ correspond to that. The computed slopes do not give a conclusive answer to the actual rate of contraction. For the halfNormal distribution, it is conceivable that methods (A) and (B) yield rate $n^{-1/3}$, whereas method (D) gives a rate almost $n^{-1/2}$. The latter can intuitively be explained by the fact that the slope of the density of the halfNormal is zero at zero which coincides with realisations from the prior.  For the Exponential distribution, methods (A) and (B) support rate $n^{-2/9}$, whereas method (D) has worse rates. For completeness, we tabulated the computed slopes in Table \ref{table:ratecomparision0}. The difficulty with rate-assessment by finite samples for Dirichlet mixture priors has been noted recently in  \cit{wehrhahn19} as well.

\begin{figure}[!htp]
\centering
\includegraphics[scale=0.65]{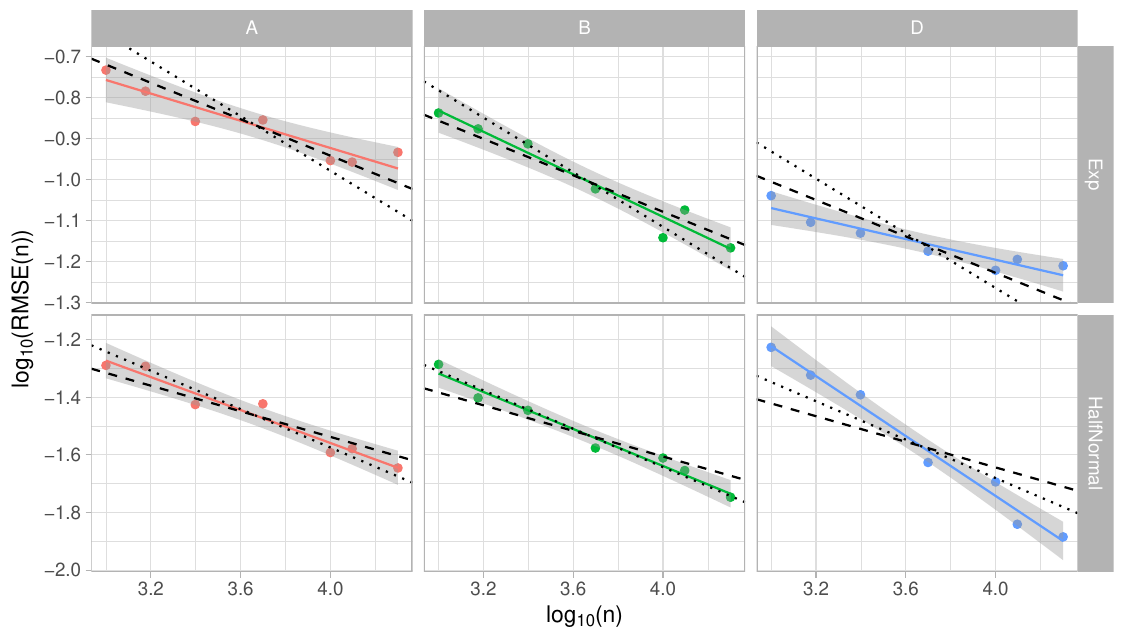}
\caption{ The base10-log of the RMSE versus the base10-log of the sample size under 3 different base measures (method A: $g(\th)\propto\exp(-\th-1/\th)$, method B: $g(\th)\propto\th\exp(-\th)$, method D: mixture of Pareto). Each dot corresponds to the average of the posterior means using Monte-Carlo size $50$. In each panel a least-squares fit is added along with a $95\%$-confidence interval. The dashed and dotted lines are best least squares fits with slopes $-2/9$ and $-1/3$ respectively.   }\label{fig:rate-comparison_new}
\end{figure}

\begin{table}[!htp]
\begin{tabular}{c |   | c | c | c }
           & \multicolumn{3}{c}{Method} \\ 
 & A & B & D   \\  \hline 
Exp &  $-0.166$   & $-0.260$   & $ -0.126$  \\
halfNormal &  $-0.286$ & $-0.321$  & $-0.520$  
\end{tabular}
\caption{Slopes of fitted lines in Figure \ref{fig:rate-comparison_new}. \label{table:ratecomparision0}}
\end{table}

\subsection{Comparing between Bayesian and  various frequentist methods for estimating $f_0$ at 0}
\label{sec:simulstud}
In this section we present a simulation study comparing our Bayesian estimator (posterior median) with various frequentist estimators available for $f_0(0)$ discussed in section \ref{sec:methods}.
We simulated 50 samples of sizes $n=50,200,10000$ from the standard exponential distribution and halfNormal distribution. For each sample, the following estimators are calculated: the posterior median estimator $f^B$, the penalized NPMLE $f^P$, the two estimators $f^S$ and $f^A$ and the histogram type estimator $f^H$. All these estimators require choosing some input parameters.
\begin{enumerate}
\item The posterior median estimator $f^B(0)$ is computed using the DPM prior with concentration parameter $\alpha=1$  and base measure in (\ref{eq:expont}). The total number of MCMC iterations was chosen to be $30000$, with $15000$ burn-in iterations. The posterior median was computed as median value of samples for $\hat{f}(0)$ in equation \eqref{eq:sampler}.
\item For the penalized estimator $f^P(\alpha_n,0)$ the parameter $\alpha_n=0.649\hat{\beta}_n^{-1/3}n^{-2/3}$ was taken with  \[\hat{\beta}_n=\max\left\{f^P(\alpha_0,0)\frac{f^P(\alpha_0,0)-f^P(\alpha_0,x_m)}{2x_m},n^{-1/3}\right\}.\]
   Here $x_m$ is the second point of jump of $f^P(\alpha_0,\cdot)$ and $\alpha_0=0.0516,0.0205$ for $n=50,200$ (listed in \cit{WoodroofeSun}).
\item For $f^S(0)=\hat{f}_n(n^{-1/3})$ no tuning is needed. For the other estimator we take $f^A(0)=\hat{f}_n(0.345\hat{B}_{21}n^{-1/3})$,  where
 \begin{equation}
 \label{eq:adaptB21}
 \hat{B}_{21}=4^{1/3}f^S(0)^{1/3}|\hat{f}'_n(0)|^{-2/3},\end{equation}
a consistent estimator of $B_{21}$ where \[\hat{f}'_n(0)=\min\{n^{1/6}(\hat{f}_n(n^{-1/6})-\hat{f}_n(n^{-1/3})),-n^{-1/3}\}.\]
\item For the histogram estimator  $f^H(0)=\mathbb{F}_n(\hat{b}_n)/\hat{b}_n$,  $\hat{b}_n=2^{-1/3}\hat{B}_{21}n^{-1/3}$ was chosen with $\hat{B}_{21}$ as in (\ref{eq:adaptB21}).\\
\end{enumerate}

Figure \ref{fig:boxexp} shows, for each combination of sample size and estimation method described, the boxplots of the 50 realized values based on samples from the standard exponential distribution.  Figure \ref{fig:boxhfm} shows these boxplots for the samples from the halfNormal distribution.

\begin{figure}[!htp]
\includegraphics[width=0.9\textwidth]{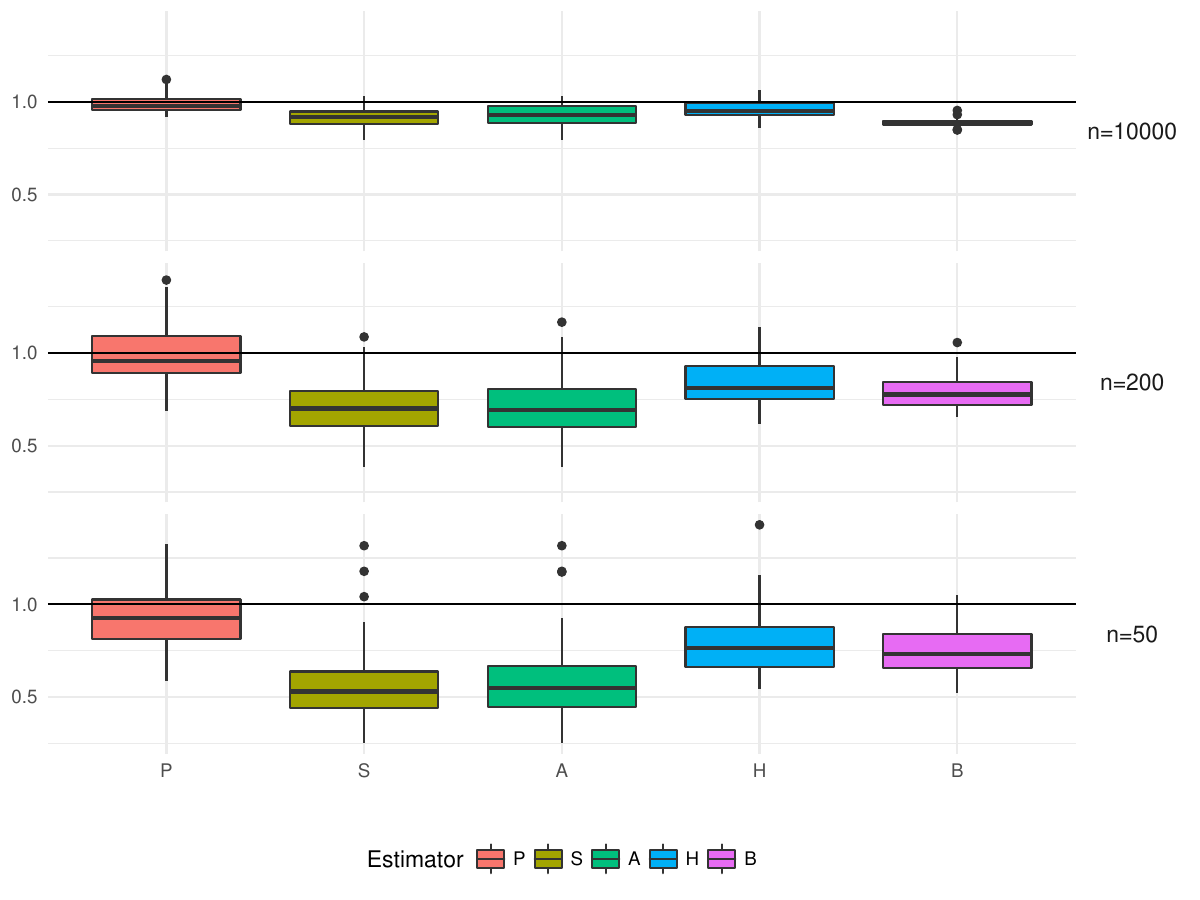}

\caption{Boxplots based on $50$ replications, where a sample of size $n$ is drawn from the standard exponential distribution.
Here $P$,$S$,$A$,$H$,$B$ correspond to the penalized maximum likelihood-, simple-, adaptive-, histogram- and posterior median- estimator respectively. The horizontal lines indicate the true value of $f_0(0)=1$.}
\label{fig:boxexp}
\end{figure}

\begin{figure}[!htp]

\includegraphics[width=0.9\textwidth]{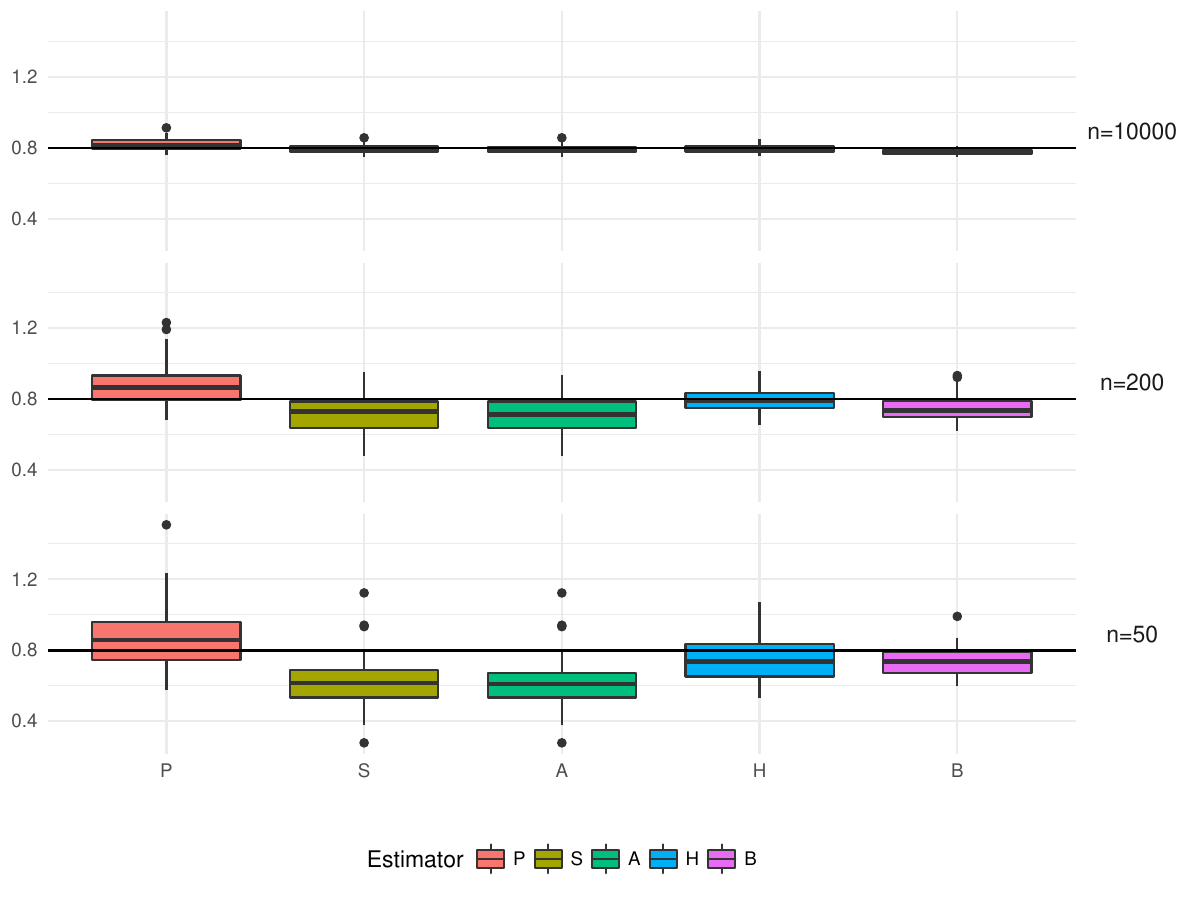}

\caption{Boxplots based on $50$ replications, where a sample of size $n$ is drawn from the halfNormal distribution. The rows correspond to the sample sizes $n=50,$ $200$ and $10000$.
Here $P$,$S$,$A$,$H$,$B$ correspond to the penalized maximum likelihood-, simple-, adaptive-, histogram- and posterior median- estimator respectively. The horizontal lines indicate the true value of $f_0(0)=\sqrt{2/\pi}$.}
\label{fig:boxhfm}
\end{figure}

\medskip
In table \ref{tab:exptab} we compare the bias, variance and mean squared error of these consistent estimators based on data from the standard exponential distribution. For the standard exponential data, the penalized estimator $f^{P}(0)$ performs best in the MSE sense. The Bayesian estimator $f^{B}$ has smallest variance, but big bias when the sample size is large ($n=10000$). This might be explained by the small contraction rate $n^{-1/6}$ at zero, but also by the fact that the Bayesian method is not specifically aimed at only estimating the density at zero, but instead the full density.
\begin{table}[!htp]
\centering
\begin{tabular}{ccccccc}
\hline
\hline
$n$ &  & $f^{P}$ & $f^{S}$ & $f^{A}$ & $f^{H}$ &$f^{B}$ \\[0.5ex]
\hline
\multirow{3}{*}{50} &Bias &-0.067& -0.423 &-0.402 &-0.214 &-0.266\\
&Var &0.033 &0.042 &0.049 &0.030 &0.013 \\
&MSE &0.037& 0.222 &0.210 &0.076& 0.084  \\
[0.8ex]
\multirow{3}{*}{200} &Bias &-0.001 &-0.286 & -0.271& -0.158 & -0.221\\
&Var &0.029 &0.020 &0.027 &0.015 &0.007\\
&MSE &0.029 &0.101 &0.100 &0.040 &0.056\\
[0.8ex]
\multirow{3}{*}{10000} &Bias &-0.011 & -0.084  &-0.072 & -0.041  &-0.112 \\
&Var &0.002 &0.002 &0.003 &0.002 &0.0004\\
&MSE &0.002 &0.010 &0.009 &0.004 &0.013\\
\hline
\end{tabular}
\caption{Simulated bias, variance and mean squared error for the five estimators from standard exponential distribution.}\label{tab:exptab}
\end{table}

 Table \ref{tab:halfnormtab} lists the bias, variance and MSE values of the estimators with observations sampled from the halfNormal distribution. For the halfNormal data, the histogram estimator $f^{H}$ behaves best in the bias and MSE sense. This can probably  be explained by the behaviour of $f_0$ near zero, note  that $f'_0(0)=0$ in the halfNormal case. The estimator for $f'_0(0)$, $\hat{f}'_n(0)$, probably quite unstable which leads to big value for $\hat{B}_{21}$ resulting in a big bandwidth $\hat{b}_n$.  As the behaviour of the underlying density is ``flat" near zero, the MSE-optimal choice of bandwidth is of the slower order $n^{-1/5}$. The posterior mean again has smallest variance.

\begin{table}[!htp]
\centering
\begin{tabular}{ccccccc}
\hline
\hline
$n$ &  & $f^{P}$ & $f^{S}$ & $f^{A}$ &$f^{H}$ & $f^{B}$\\[0.5ex]
\hline
\multirow{3}{*}{50}
&Bias & 0.063 & -0.182&  -0.185& -0.043&  -0.073 \\
&Var &0.029& 0.022& 0.022& 0.016& 0.007 \\
&MSE &0.033& 0.055& 0.056& 0.018& 0.012\\
[0.8ex]
\multirow{3}{*}{200}
&Bias &0.080& -0.086& -0.088& -0.011& -0.051 \\
&Var & 0.014& 0.012& 0.012& 0.004& 0.005\\
&MSE &0.020& 0.019& 0.020 & 0.004& 0.008\\
[0.8ex]
\multirow{3}{*}{10000} &Bias &0.0216&  -0.0022&  -0.0060& -0.0019&  -0.0239\\
&Var &0.0010& 0.0005& 0.0006& 0.0005& 0.0002\\
&MSE &0.0015& 0.0005& 0.0006& 0.0005& 0.0008\\
\hline
\end{tabular}
\caption{Simulated bias, variance and mean squared error for the five estimators based on samples from the standard halfNormal distribution.}\label{tab:halfnormtab}
\end{table}

\subsection{Application to fertility data}
\label{sec:fertility}
In \cit{Keiding} data concerning the fertility of a population are analysed. The aim is to estimate the distribution of the duration for women to become pregnant from when they start attempting, based on data from so-called current durations. These current durations can be modeled as described in the introduction. Indeed, the true durations are modeled as sample from an unknown distribution function $H_0$. According to length-biased sampling, individuals are selected and then the time since the start of attempting to become pregnant is administered. This is called the current duration, and can be seen as a uniform random fraction of the true duration of the selected individual. This current duration then has bounded decreasing probability density $f_0$ as given in (\ref{eq:expre}). The distribution function of the durations $H_0$, can be expressed in terms of $f_0$ as in Equation(\ref{eq:invrel}).  For more information on the design of this study we refer to \cit{Keiding}. For illustration purpose we only used the $n=618$ measured current durations that do not exceed $36$ months. Figure \ref{fig:hist_f} shows the histogram of 618 raw data, modeled as sample from the decreasing density $f_0$.
\begin{figure}
	\begin{center}
		\includegraphics[scale=0.7]{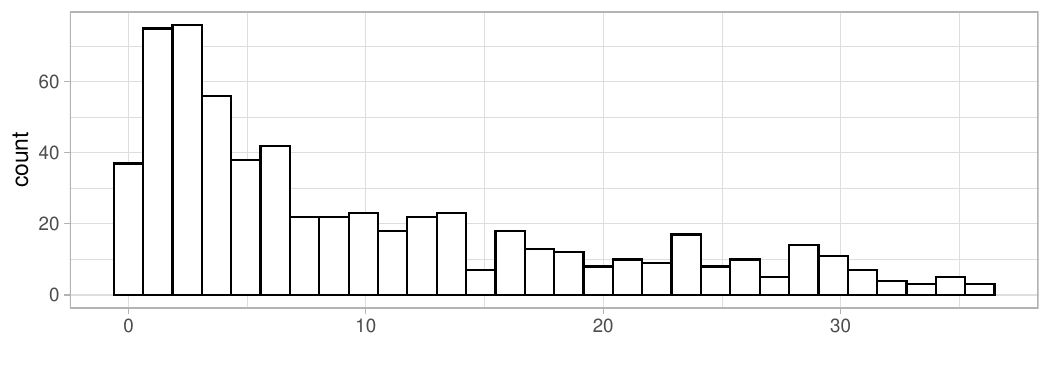}
		\caption{Histogram of the current durations fertility data that do not exceed $36$ months.  \label{fig:hist_f}}
	\end{center}
\end{figure}

In this section we estimate the density $f_0$ using base measure choice (A) which satisfies assumption \ref{ass:base}
 and (D) which does not satisfy assumption \ref{ass:base} with concentration parameter $\alpha=1$. Then each MCMC iterate of the posterior mean can be converted to an iterate for $H_0$ using the relation \eqref{eq:invrel}. In \cit{GroJo15} chapter 9, pointwise confidence bands for $f_0$ and $H_0$ are constructed based on the smoothed maximum likelihood estimator. Having derived the estimators, producing such confidence bands needs quite some fine tuning.  In this section, we construct the Bayesian counterpart of the confidence bands, credible regions for $H_0$. Contrary to the frequentist approach, having the machinery available for computing the posterior mean, the pointwise credible sets can be obtained directly from the MCMC output. The results for the fertility data are shown in Figures \ref{fig:fertdata} using base measures (A) and (D) respectively.

\begin{figure}
\begin{center}
\includegraphics[scale=0.75]{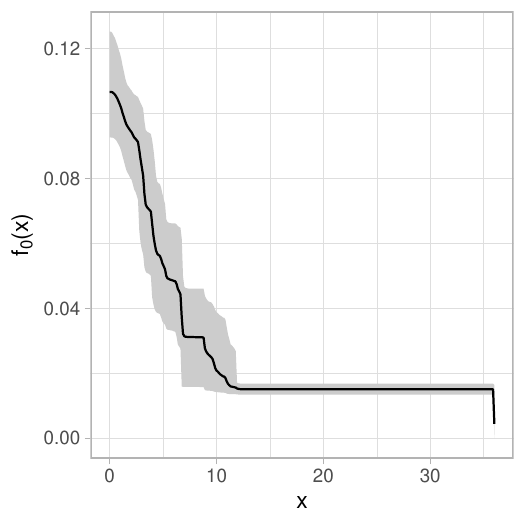}
\includegraphics[scale=0.75]{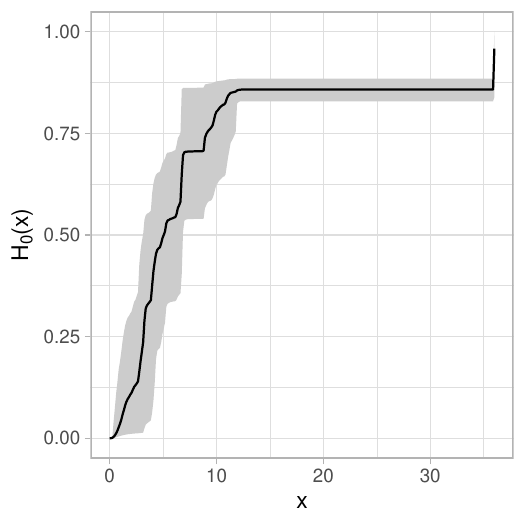}

\includegraphics[scale=0.75]{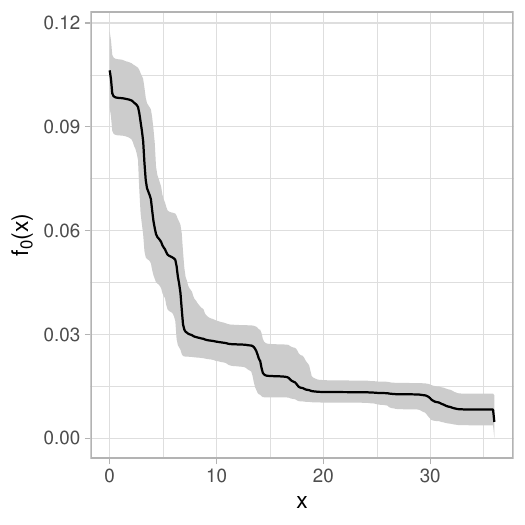}
\includegraphics[scale=0.75]{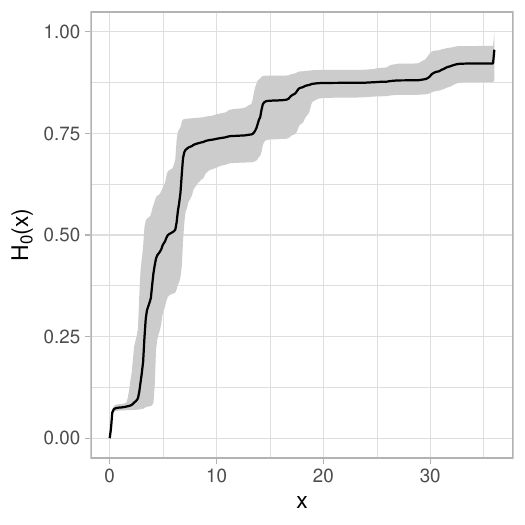}

\caption{Fertility data. Top: results for base measure (A) and $\alpha=1$. Bottom: results for base measure (D) and $\alpha=1$. Left: posterior mean and $95\%$ pointwise credible sets for probability density function $f_0$. Right: corresponding estimate and pointwise credible sets for the distribution function $H_0(x)=1-f_0(x)/f_0(0)$. \label{fig:fertdata}}
\end{center}
\end{figure}

\section{Discussion}
In this paper we have used Bayesian analysis to nonparametrically estimate a decreasing density based on a random sample. Particular emphasis is given to estimation of the density at zero and  sufficient criteria on the base measure of the prior are derived to obtain contraction rate $n^{-2/9}$. Besides a base measure attaining this rate, we have investigated the relative performance of   other base measures by means of a Monte Carlo study. This study was extended to compare multiple frequentist estimators for estimating the density at zero to a Bayesian derived point estimator.  

It remains an open question whether for a given density function $f$ there exists a base measure such that the contraction rate for estimation of $f(0)$ is $n^{-1/3}$. From the simulation study it appears that taking a mixture of Pareto densities as base measure empirically yields satisfactory performance and henceforth we  recommend taking base measure (D) from Section \ref{subsec:basemeasures}.

\appendix

\section{Review and supplementary proof of inequality \eqref{eq:kln}}\label{aped:kln}

In this section we point out a technical issue arising in the proof of  inequality \eqref{eq:kln}. As mentioned in  section \ref{subsec:adj}, it suffices to lower bound the prior mass of a certain subset $\scr{N}_n$ of $\scr{S}_n$, for which lower bounding $\Pi(\scr{N}_n)$ is tractable. To construct this set, we first need some approximation results.

\begin{lem}\label{lem:mix}
	For any $\th_0>0$ there exists a discrete measure $\tilde{P}=\sum_{i=1}^{\tilde{N}} \tilde{p}_i\delta_{y_i}$, with $y_i\in [\th_0,\infty)$, $p_i\in [0,1]$, $\tilde{N}\lesssim 1/\eps_n$  and  $\sum_{i=1}^{\tilde{N}} p_i=\int_{\th_0}^\infty f_0(x)dx$ such that
	\[ \int_{\th_0}^\infty \left(\sqrt{f_0(x)}-\sqrt{f_{\tilde{P}}(x)}\right)^2 \dd x \lesssim \eps_n^2. \]
	Moreover, the sequence $\{y_i\}$ can be taken such that $|y_i - y_j| \geq 2\eps_n^2$ for all $i, j \le \tilde{N}$. 	
\end{lem}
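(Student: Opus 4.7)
The plan is to exhibit $\tilde P$ by a quadrature argument applied to the Williamson mixing measure of $f_0$, combined with a tail truncation. Since $f_0$ is a bounded nonincreasing density on $[0,\infty)$, Williamson's theorem yields a unique (sub-)probability measure $G$ with $f_0(x)=\int_x^\infty \theta^{-1}\,\dd G(\theta)$. Using the tail hypothesis $f_0(x)\le e^{-\beta x^\tau}$ from Assumption \ref{ass:f_0}, I would first choose $M_n\asymp(\log n)^{1/\tau}$ so large that the contribution of $G\vert_{[M_n,\infty)}$ to $f_0(x)$ for $x\in[\theta_0,\infty)$ is bounded, uniformly in $x$, by a constant multiple of $\eps_n^2$, and so that $\int_{M_n}^\infty f_0 \lesssim \eps_n^2$. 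This absorbs the tail into the target accuracy and reduces the task to approximating $G$ restricted to $[\theta_0,M_n]$ by a discrete measure with $O(1/\eps_n)$ atoms.

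The core construction is a partition $\theta_0=\eta_0<\eta_1<\cdots<\eta_{\tilde N}=M_n$ of $[\theta_0,M_n]$ with $\tilde N\lesssim 1/\eps_n$ cells $I_i=(\eta_{i-1},\eta_i]$, atoms $y_i\in I_i$ (e.g.\ the right endpoints) and weights $p_i=G(I_i)$, supplemented by one additional atom near $M_n$ that absorbs $G([M_n,\infty))$. The separation $|y_i-y_j|\ge 2\eps_n^2$ is automatic as soon as cell widths exceed $2\eps_n^2$, which is comfortably compatible with only $\tilde N\lesssim 1/\eps_n$ cells covering a range of size $M_n\to\infty$. After these choices are fixed, a single weight is modified by at most $O(\eps_n^2)$ to enforce $\sum_i p_i=\int_{\theta_0}^\infty f_0\,\dd x$, and by the triangle inequality this modification adds at most an $O(\eps_n^2)$ contribution to the Hellinger integral.

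To bound $\int_{\theta_0}^\infty(\sqrt{f_0}-\sqrt{f_{\tilde P}})^2\,\dd x$ I would use the elementary pointwise inequality $(\sqrt a-\sqrt b)^2\le |a-b|$ to reduce to the $L_1$-distance, together with the explicit kernel computation
\[
\|\psi_\cdot(\theta)-\psi_\cdot(\theta')\|_{L_1(\dd x)}=\frac{2|\theta-\theta'|}{\max(\theta,\theta')}.
\]
Writing $f_0-f_{\tilde P}=\sum_i\int_{I_i}\bigl(\psi_\cdot(\theta)-\psi_\cdot(y_i)\bigr)\,\dd G(\theta)$ and applying the triangle inequality under the $L_1(\dd x)$-norm, the error decomposes into a weighted sum of relative cell widths $\delta_i/\eta_{i-1}$ against $G(I_i)$, plus the tail and mass-matching residuals. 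The hard part will be making this weighted sum $O(\eps_n^2)$ with only $\tilde N\lesssim 1/\eps_n$ cells: a uniform grid yields error of order $M_n\eps_n$, which is too crude, so I would use a nonuniform (geometric, or $G$-quantile) partition that concentrates atoms where the kernel $\theta\mapsto\theta^{-1}\mathbf{1}_{[0,\theta]}(x)$ varies fastest, or sharpen the Hellinger inequality via $(\sqrt a-\sqrt b)^2=(a-b)^2/(\sqrt a+\sqrt b)^2$ together with the polynomial lower bound $f_0(x)\ge f_0(M_n)$ on $[\theta_0,M_n]$. This balancing of sparsity, minimum separation and approximation accuracy is the delicate step of the argument.
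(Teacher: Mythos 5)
The paper's proof is a short post-processing argument: it invokes Lemma~11 of \cite{Salomond} as a black box to produce a discrete measure $P=\sum_{i=1}^N p_i\delta_{z_i}$ with $N\lesssim 1/\eps_n$ atoms in $[\th_0,\infty)$ and squared-Hellinger error $\lesssim\eps_n^2$, then it merges any $z_i$ that is within $2\eps_n^2$ of a retained atom into its nearest neighbour, checks via the kernel identity $\int|\psi_x(\th_1)-\psi_x(\th_2)|\dd x=2|\th_1-\th_2|/\max(\th_1,\th_2)$ that this merging costs only $L_1(f_P,f_{\tilde P})\lesssim\eps_n^2$, and closes with the triangle inequality for the Hellinger metric together with $h^2\le L_1$. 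Your proposal takes a genuinely different route: you attempt to build $\tilde P$ from scratch by quadrature of the Williamson mixing measure $G$, rather than perturbing an existing approximation. That is not just a presentational difference; it forces you to prove the hard approximation bound that the paper outsources.

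And indeed the proposal contains a real gap, which you yourself flag as ``the delicate step'' and leave unresolved. The routes you sketch to fill it do not work. Reducing to $L_1$ via $(\sqrt a-\sqrt b)^2\le|a-b|$ and summing the per-cell kernel errors gives, with $\tilde N\lesssim 1/\eps_n$ atoms, an $L_1$ error of order $\eps_n$ at best on a bounded range (with a uniform grid) and of order $\eps_n\log M_n$ with a geometric grid that equalizes relative cell widths; in either case $h^2\le L_1$ gives $O(\eps_n)$, a full factor $\eps_n$ short of the required $O(\eps_n^2)$. The alternative you mention, rewriting $(\sqrt a-\sqrt b)^2=(a-b)^2/(\sqrt a+\sqrt b)^2$ and lower bounding by $f_0(M_n)$, is worse rather than better: under Assumption~\ref{ass:f_0} the value $f_0(M_n)$ is \emph{exponentially} small in $M_n$, not polynomial, so the resulting bound blows up. What would actually close a from-scratch argument is a direct Taylor expansion of $\sqrt{f_0}$ (mean-value theorem), giving a per-cell squared-Hellinger contribution of order $|f_0'|^2/f_0\cdot(\text{cell width})^3$ so that $O(1/\eps_n)$ cells of width $\eps_n$ yield $O(\eps_n^2)$; but this requires a lower bound on $f_0$ and a Lipschitz bound on $f_0$, which hold near the origin (this is exactly the argument in Lemma~\ref{lem:mix2}) but fail on the tail $[\th_0,\infty)$ where $f_0\downarrow 0$. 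That is precisely the technical content of Salomond's Lemma~11 which the present lemma is deliberately \emph{not} reproving: the only new content of Lemma~\ref{lem:mix} is the $2\eps_n^2$ separation of the atoms, obtained cheaply by the merge-and-triangle-inequality device.
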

\begin{proof}
	Without the claimed separation property, existence of the discrete measure follows from lemma 11 in \cite{Salomond}. Denote this measure by $P=\sum_{i=1}^N p_i \delta_{z_i}$ and note that $N\lesssim 1/\eps_n$. The set $y_1,\ldots, y_{\tilde{N}}$ is obtained from $\{z_1,\ldots, z_N\}$  by removing points from the latter set which are not $2\eps_n^2$-separated. Clearly, $\tilde{N}\le N \lesssim 1/\eps_n$.  The mass $p_i$ of any removed point $z_i$ is subsequently  added to the point $y_j$ ($1\le j \le \tilde{N}$) that is closest to $z_i$. Denote the mass of $y_j$, obtained in this way, by $\tilde{p}_j$.  Hence, we can written $\tilde{P}=\sum_{j=1}^{\tilde{N}}\tilde{p}_j\delta_{y_j}=\sum_{i=1}^{N}p_i\delta_{y_{k(i)}}$, where $k(i)=j$ if $p_i$ assigned to $\tilde{p}_j$. Furthermore,
	\begin{align*}
	L_1\left( f_{P}, f_{\tilde{P}}\right)
	&=\int \Big| \sum_{i=1}^N p_i\psi_x(z_i)-\sum_{j=1}^{\tilde{N}}\tilde{p}_j\psi_x(y_j)\Big| \dd x\\
	&=\int \Big| \sum_{i=1}^N p_i(\psi_x(z_i)-\psi_x(y_{k(i)}))\Big| \dd x\\
	&=\int \Big| \sum_{i:z_i\neq y_{k(i)}} p_i(\psi_x(z_i)-\psi_x(y_{k(i)}))\Big| \dd x
	\end{align*}
	Since for any $\th_0<\th_1< \th_2$,
	\begin{align*}
	\int \mid\psi_x(\th_1)-\psi_x(\th_2)\mid dx&= \int_{x\le \th_1}+\int_{\th_1<x\le \th_2}+\int_{x>\th_2}\mid\psi_x(\th_1)-\psi_x(\th_2)\mid dx\\
	&=2(\th_2-\th_1)/\th_2\lesssim \th_2-\th_1.\end{align*}
	This implies that
	\begin{align*}L_1\left( f_{P}, f_{\tilde{P}}\right)&\le \sum_{i:z_i\neq y_{k(i)}}p_i \int \mid \psi_x(z_i)-\psi_x(y_{k(i)})\mid d x\\
	&\le \sum_{i:z_i\neq y_{k(i)}}p_i \eps_n^2
	\lesssim \eps_n^2
	\end{align*}
	The claimed result now follows from the triangle inequality and that the squared Hellinger distance is bounded by the $L_1$-distance.
\end{proof}

\begin{lem}\label{lem:mix2}
	Assume $f_0$ satisfies assumption \ref{ass:f_0}. There exists a discrete probability measure $\tilde{P}$, supported on $\{i\eps_n,\, 1\le i \le N'\} \cup \{y_j,\, 1\le j \le \tilde{N}\}$, with $N' = \lfloor x_0/\eps_n\rfloor $ such that
	\[  \int_0^\infty \left( \sqrt{f_0(x)} - \sqrt{f_{\tilde{P}}(x)} \right)^2 \dd x \lesssim \eps_n^2. \]
	
\end{lem}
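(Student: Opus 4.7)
The plan is to write $\tilde P = \tilde P_1 + \tilde P_2$, splitting into a tail handled by Lemma \ref{lem:mix} and a bulk on $[0, N'\eps_n]$ handled by a step-function approximation based on the Lipschitz regularity of $f_0$ on $[0,x_0]$. First apply Lemma \ref{lem:mix} with $\th_0 = N'\eps_n$ to obtain $\tilde P_1 = \sum_{j=1}^{\tilde N}\tilde p_j\,\delta_{y_j}$, supported on $[N'\eps_n,\infty)$, with $\sum_j\tilde p_j = \int_{N'\eps_n}^\infty f_0\,\dd x$ and $\int_{N'\eps_n}^\infty(\sqrt{f_0}-\sqrt{f_{\tilde P_1}})^2\,\dd x \lesssim \eps_n^2$. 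Because each $y_j\ge N'\eps_n$, the density $f_{\tilde P_1}$ is identically the constant $A := \sum_j\tilde p_j y_j^{-1}$ on $[0, N'\eps_n]$. The central observation is that $A$ is forced to be of size $O(\eps_n)$. Indeed, Cauchy--Schwarz applied to $|f_0-f_{\tilde P_1}| = |\sqrt{f_0}-\sqrt{f_{\tilde P_1}}|\,|\sqrt{f_0}+\sqrt{f_{\tilde P_1}}|$ together with $\int_{N'\eps_n}^\infty(f_0+f_{\tilde P_1})\,\dd x = O(1)$ gives $\int_{N'\eps_n}^\infty |f_0-f_{\tilde P_1}|\,\dd x\lesssim \eps_n$; but the signed integral equals $\int_{N'\eps_n}^\infty f_0 - \bigl(\sum_j\tilde p_j - N'\eps_n A\bigr) = N'\eps_n A$, so $N'\eps_n A \lesssim \eps_n$ and hence $A \lesssim \eps_n/x_0$ for $n$ large.

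With $A = O(\eps_n)$ in hand, I would approximate $f_0-A$ on $[0, N'\eps_n]$ by the decreasing step function $S(x) = c_i$ on $((i-1)\eps_n, i\eps_n]$ with $c_i = f_0((i-1)\eps_n) - A$. Assumption \ref{ass:f_0} yields $|S(x)-(f_0(x)-A)|\lesssim \eps_n$ uniformly, and for large $n$ we have $c_1\ge\cdots\ge c_{N'}\ge 0$. Writing $S(x) = \sum_{i=1}^{N'}q_i (i\eps_n)^{-1}\ind_{[0, i\eps_n]}(x)$ with $q_i = i\eps_n(c_i-c_{i+1})\ge 0$ (setting $c_{N'+1}=0$) provides the desired mixture representation. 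A Riemann-sum estimate yields $\sum_i q_i + \sum_j\tilde p_j = 1 - N'\eps_n A + O(\eps_n) = 1 + O(\eps_n)$, so a correction of $q_{N'}$ of size $O(\eps_n)$ restores exact probability normalization; the corresponding uniform shift in $f_{\tilde P}$ on $[0, N'\eps_n]$ is at most $O(\eps_n)/(N'\eps_n) = O(\eps_n)$. Set $\tilde P := \tilde P_1 + \sum_i q_i\delta_{i\eps_n}$, which is supported on the required union of grid points and $y_j$'s.

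To verify the conclusion, observe that on $[N'\eps_n,\infty)$ one has $f_{\tilde P} = f_{\tilde P_1}$, so the Hellinger bound is immediate from Lemma \ref{lem:mix}. On $[0, N'\eps_n]$ one has $|f_{\tilde P}(x) - f_0(x)|\lesssim \eps_n$ uniformly and $f_0(x)\ge f_0(x_0)>0$, so $(\sqrt{f_0}-\sqrt{f_{\tilde P}})^2 \le (f_0-f_{\tilde P})^2/f_0 \lesssim \eps_n^2$, and integrating over the bounded interval contributes at most $O(\eps_n^2)$. The main obstacle is the second step above: one must exploit the specific normalization $\sum_j\tilde p_j = \int_{\th_0}^\infty f_0$ in Lemma \ref{lem:mix} to conclude that the residual constant $A$ is $O(\eps_n)$. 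Without this bound the target $f_0-A$ could not be matched by a grid-based mixture whose total mass is compatible with $\tilde P_1$, and the two pieces could not be glued into a probability measure; once $A = O(\eps_n)$ is established, the remainder is standard Lipschitz step-function approximation together with a cosmetic $O(\eps_n)$ mass adjustment.
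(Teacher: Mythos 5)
Your construction is essentially the paper's: you split off the tail with Lemma~\ref{lem:mix}, approximate the bulk by a left‑endpoint step function on the grid $\{i\eps_n\}$, and in fact your weights $q_i=i\eps_n\bigl(f_0((i-1)\eps_n)-f_0(i\eps_n)\bigr)$ (with $q_{N'}$ absorbing the residual constant $A=\sum_j\tilde p_j/y_j$) are exactly the paper's $p_i'$, after which both arguments bound the bulk Hellinger error by the Lipschitz constant of $f_0$ on $[0,x_0]$ and the positive lower bound $f_0(x_0)$. You do add two small but genuine refinements that the paper glosses over: by showing $A\lesssim\eps_n$ (Cauchy--Schwarz plus the normalization $\sum_j\tilde p_j=\int_{\th_0}^\infty f_0$) you can verify that the weights stay nonnegative and that an $O(\eps_n)$ mass correction turns $\tilde P$ into an honest probability measure, and by taking $\th_0=N'\eps_n$ rather than $\th_0=x_0$ you avoid the leftover interval $(N'\eps_n,x_0]$ that the paper's explicit evaluation of $f_{\tilde P}$ does not cover.
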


\begin{proof}
	By lemma \ref{lem:mix} applied with $\th_0=x_0$ it suffices to prove $\int_0^{x_0} (\sqrt{f_0(x)}-\sqrt{f_{\tilde{P}(x)}})^2 \dd x\lesssim \eps_n^2$.
	Define the measure $\tilde{P}=\sum_{i=1}^{N'}p'_i\delta_{i\eps_n}+\sum_{j=1}^{\tilde{N}}\tilde{p}_{j}\delta_{y_j}$, where $\tilde{p}_{j}$ is as defined in lemma \ref{lem:mix} and
	$$p'_i=\begin{cases}(f_0((i-1)\eps_n)-f_0(i\eps_n))i\eps_n &\quad \text{if}\quad i< N'\\(f_0((N'-1)\eps_n)-a)N'\eps_n &\quad \text{if}\quad i=N'\end{cases}$$
	with $a=\sum_{j=1}^{\tilde{N}}\tilde{p}_{j}/y_j$. Then  for  $x\in ((i-1)\eps_n,i\eps_n]$,
	\begin{align*}
	f_{\tilde{P}}(x) & = \sum_{k=i}^{N'} p'_k \psi_x(k\eps_n) + \sum_{j=1}^{\tilde{N}} \tilde{p}_j \psi_x(y_j) = \sum_{k=i}^{N'} \frac{p'_k}{k\eps_n} + a  \\ & = \sum_{k=i}^{N'-1} k\eps_n \frac{f_0((k-1)\eps_n)-f_0(k\eps_n)}{k\eps_n} + \frac{f_0((N'-1)\eps_n)-a}{N'\eps_n} N'\eps_n + a \\&=f_0((i-1)\eps_n)
	\end{align*}
	By the mean value theorem, it follows that
	\begin{align*}
	\int_0^{x_0} \left(\sqrt{f_0(x)}-\sqrt{f_{\tilde{P}}(x)}\right)^2 \dd x  & = \sum_{i=1}^{N'} \int_{(i-1)\eps_n}^{i\eps_n}  \left(\sqrt{f_0(x)}-\sqrt{f_0((i-1)\eps_n)}\right)^2 \dd x \\
	& \le \sum_{i=1}^{N'} \int_{(i-1)\eps_n}^{i\eps_n}  \left(\frac{f_0'(\zeta_i)}{2\sqrt{f_0(\zeta_i)}}(x-(i-1)\eps_n)\right)^2 \dd x \\
	&\le \frac{(\sup_{x\in[0,x_0]}|f_0'(x)|)^2}{4f_0(\th_0)}\sum_{i=1}^{N'} \int_{(i-1)\eps_n}^{i\eps_n}(x-(i-1)\eps_n)^2\dd x\\
	&= \frac{(\sup_{x\in[0,x_0]} |f_0'(x)|)^2}{12f_0(\th_0)}\sum_{i=1}^{N'}\eps_n^3\lesssim \eps_n^2
	\end{align*}
	where $\zeta_i\in ((i-1)\eps_n,i\eps_n)$.
\end{proof}

By lemmas \ref{lem:mix} and \ref{lem:mix2} we have.
\begin{cor}
	Assume $f_0$ satisfies assumption \ref{ass:f_0}. There exists a discrete probability measure $\tilde{P}$, supported on $\{i\eps_n,\, 1\le i \le N'\} \cup \{y_j,\, 1\le j \le \tilde{N}\}$, with $\min_{1\le j\le \tilde{N}} y_j \ge x_0$, $N' = \lfloor x_0/\eps_n\rfloor$ and $\tilde{N} \lesssim 1/\eps_n$ such that
	\[  \int_0^\infty \left( \sqrt{f_0(x)} - \sqrt{f_{\tilde{P}}(x)} \right)^2 \dd x \lesssim \eps_n^2. \]
	Moreover, the sequence $\{y_i\}$ can be taken such that $|y_i - y_j| \geq 2\eps_n^2$ for all $i, j \le \tilde{N}$. 	
\end{cor}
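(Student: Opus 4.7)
The corollary is essentially a bookkeeping combination of the two preceding lemmas, so my plan is to verify that the two constructions align and that nothing is lost when they are concatenated.

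First I would invoke Lemma \ref{lem:mix} with the choice $\th_0 = x_0$. This produces a discrete subprobability measure $\tilde{Q} = \sum_{j=1}^{\tilde{N}} \tilde{p}_j \delta_{y_j}$ supported in $[x_0, \infty)$, with $\tilde{N} \lesssim 1/\eps_n$, with the support points $2\eps_n^2$-separated, with total mass $\sum_j \tilde{p}_j = \int_{x_0}^\infty f_0(x)\,\dd x$, and satisfying $\int_{x_0}^\infty (\sqrt{f_0} - \sqrt{f_{\tilde{Q}}})^2 \,\dd x \lesssim \eps_n^2$. Both extra geometric properties required by the corollary ($\min_j y_j \geq x_0$ and the $2\eps_n^2$ separation) are inherited directly from this step.

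Next I would apply the construction in the proof of Lemma \ref{lem:mix2} using this specific $\tilde{Q}$ for the tail mass. That is, I take
\[ \tilde{P} = \sum_{i=1}^{N'} p'_i \delta_{i \eps_n} + \sum_{j=1}^{\tilde{N}} \tilde{p}_j \delta_{y_j}, \]
with $N' = \lfloor x_0/\eps_n \rfloor$ and the weights $p'_i$ defined exactly as in Lemma \ref{lem:mix2} (so that on each cell $((i-1)\eps_n, i\eps_n]$ one recovers $f_{\tilde{P}}(x) = f_0((i-1)\eps_n)$). The telescoping identity used inside that proof is what makes $\tilde{P}$ a probability measure: summing the $p'_i$ gives $\int_0^{x_0} f_0 - a \cdot N'\eps_n$ plus boundary terms that cancel with $a = \sum_j \tilde{p}_j/y_j$, and combining with $\sum_j \tilde{p}_j = \int_{x_0}^\infty f_0$ produces total mass one; I would write this telescoping out explicitly as the one genuine computation.

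Finally, I would split the integral at $x_0$ and bound the two pieces separately. On $[0,x_0]$ the mean-value bound inside Lemma \ref{lem:mix2}, using $\sup_{x\in[0,x_0]}|f_0'(x)| < \infty$ from Assumption \ref{ass:f_0} and the lower bound $f_0(\th_0)$ appearing there, gives $\lesssim N'\eps_n^3 \lesssim \eps_n^2$; on $[x_0,\infty)$, since $f_{\tilde{P}}$ restricted to that interval equals $f_{\tilde{Q}}$, the bound delivered by Lemma \ref{lem:mix} carries over. Adding the two yields the claimed $\lesssim \eps_n^2$.

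The only place where something mildly delicate happens is ensuring that patching $\tilde{Q}$ into the $[0,x_0]$ construction does not spoil the telescoping that makes total mass equal one; once $a$ is defined as above, this is automatic. Beyond that, there is no real obstacle, since every analytic estimate has already been established in Lemmas \ref{lem:mix} and \ref{lem:mix2}.
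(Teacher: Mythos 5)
Your proposal is correct and spells out exactly the paper's own (implicit) argument: the paper just writes ``By lemmas \ref{lem:mix} and \ref{lem:mix2} we have,'' and what you describe is precisely that combination — invoke Lemma~\ref{lem:mix} with $\th_0 = x_0$ to obtain the tail measure carrying the $2\eps_n^2$-separation and $\min_j y_j \ge x_0$, then feed its atoms and weights into the construction of Lemma~\ref{lem:mix2} and split the Hellinger integral at $x_0$. (One small caveat, inherited from the paper rather than introduced by you: the telescoping sum $\sum_i p'_i$ gives a Riemann approximation of $\int_0^{x_0} f_0$ rather than that integral exactly, so total mass is $1+O(\eps_n)$; this does not affect anything downstream but is worth being aware of if you want to write it completely rigorously, e.g.\ by normalizing.)
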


For easy reference, we redefine the weights $\tilde{p}_j$ of the measure $\tilde{P}$ from this corollary so that we can write $\tilde{P}=\sum_{j=1}^{N'} \tilde{p}_j \delta_{j\eps_N} + \sum_{j=1}^{\tilde{N}} \tilde{p}_{N'+j} \delta_{y_j}$.

Next, we use the support points and masses of the constructed measure $\tilde{P}$. To this end, define
\begin{align*}
U_i&=(i\eps_n,(i+1)\eps_n] \quad \text{for} \quad i=1,\dots,N'\\
U_{N'+i}&=[\th_0\vee(y_i-\eps_n^2),y_i+\epsilon_n^2] \quad \text{for} \quad i=1,\dots,\tilde{N}\\
U_0&=[0,\infty)\cap(\cup_{i=1}^{\tilde{N}+N'} U_i)^c,
\end{align*}
such that $U_0, U_1,\ldots, U_{N'+\tilde{N}}$ is a partition of $[0,\infty)$. Now define
the following set of decreasing densities
\[\mathcal{N}_n=\{f_{P'}\,:\,  P'([0,\infty))=1,\, |P'(U_i)-\tilde{p}_i| \le\epsilon_n^2/\tilde{N},\, 1\le i \le \tilde{N}+N'\}\]
To prove that  $\mathcal{N}_n$ is a subset of $\mathcal{S}_n$ a key property is that the  measure $\tilde{P}$ is constructed such that  $\int_0^\infty \left(\sqrt{f_0}-\sqrt{f_{\tilde{P}}}\right)^2 \lesssim \eps_n^2$ (see the proof of lemma 8 in \cite{Salomond}). Moreover, the prior mass of $\scr{N}_n$ is tractable because  $U_0, U_1,\ldots, U_{N'+\tilde{N}}$  is a partition of $[0,\infty)$.

\begin{rem}
	If the set $\scr{N}_n$ is defined with the masses $p_1,\ldots, p_N$ from lemma \ref{lem:mix} (as is done in \cite{Salomond}, then the resulting sets $\{U_i\}$ do  not form a partition. This results in intractable expressions for $\Pi(\scr{N}_n)$. For that reason, we defined another discrete measure $\tilde{P}$ such that the support points are $2\eps_n^2$ separated thereby fixing the issue.
	
\end{rem}

The arguments for lower bounding $\Pi(\scr{N}_n)$ can now be finished as outlined in \cite{Salomond}.
Without loss of generality, for $n$ sufficiently large we can assume $\alpha G_0(U_i)<1$, for $i=0,1,\dots,N'+\tilde{N}$. Similar to Lemma 6.1 in \cit{Ghosal}, we have
\begin{align*}
\Pi(\mathcal{N}_n)&\ge Dir(P'(U_i)\in [\tilde{p}_i\pm\epsilon_n^2/\tilde{N}],\,  1\le i\le \tilde{N}+N')\\
&\ge \Gamma(\alpha)\prod_{i=1}^{N'+\tilde{N}}\frac{1}{\Gamma(\alpha G_0(U_i))}\int_{0\wedge(\tilde{p}_i-\epsilon_n^2/\tilde{N})}^{\tilde{p}_i+\epsilon_n^2/\tilde{N}}x_i^{\alpha G_0(U_i)-1} \dd x_i.
\end{align*}
Here we use $(P'(U_0))^{\alpha G_0(U_0)-1}\ge 1$. As $x_i^{\alpha G_0(U_i)-1}\ge 1$ we have $$\int_{0\wedge(p_i-\epsilon_n^2/\tilde{N})}^{p_i+\epsilon_n^2/\tilde{N}}x_i^{\alpha G_0(U_i)-1} \dd x_i\ge 2\epsilon_n^2\tilde{N}^{-1}.$$
Substituting this bound into the lower bound on $\Pi(\scr{N}_n)$, combined with the inequalities $\beta\Gamma(\beta)=\Gamma(\beta+1)\le 1$ for $0<\beta\le 1$ and $\tilde{N}\lesssim\epsilon_n^{-1}$, we obtain \[ \Pi(\mathcal{N}_n)\gtrsim \epsilon_n^{3(N'+\tilde{N})}\prod_{i=1}^{N'+\tilde{N}} G_0(U_i)=\exp\left(3(N'+\tilde{N})\log\epsilon_n+\sum_{i=1}^{N'+\tilde{N}}\log G_0(U_i)\right).\]
When $N'<i\le N'+\tilde{N}$ it is trivial that $G_0(U_i)\gtrsim\epsilon_n^2$ and therefore
$$\sum_{i=N'+1}^{N'+\tilde{N}}\log G_0(U_i)\gtrsim \tilde{N}\log\epsilon_n.$$

For bounding $G_0(U_i)$ when $i\le N'$, we use the property of $g_0$ in (\ref{eq:priormass}): $g_0(\th)\ge \underline{k} e^{-\underline{a}/\th}$. In this case we have
\[G_0(U_i)\ge\underline{k}\int_{U_i}e^{-\underline{a}/\th}d\th \ge\underline{k}\epsilon_n\exp(-\underline{a}/(i\epsilon_n)).
\]
Implying
\[\sum_{i=1}^{N'}\log G_0(U_i)\ge N'\log(\underline{k}\epsilon_n)-\underline{a}\epsilon_n^{-1}\sum_{i=1}^{N'} i^{-1}.\]
Since $\sum_{i=1}^{N'} i^{-1}\asymp \log (N')\asymp \log\epsilon_n^{-1}$, we therefore have
\begin{equation} \label{eq:track-low-mass}\sum_{i=1}^{N'}\log G_0(U_i)\gtrsim \epsilon_n^{-1}\log\epsilon_n.\end{equation}
Therefore, we obtain
$$ \Pi(\scr{S}_n) \ge \Pi(\mathcal{N}_n)\gtrsim e^{C_1\epsilon_n^{-1}\log \epsilon_n}\gtrsim e^{-C_1n\epsilon_n^2}$$
for some $C_1>0$. This is exactly as is required.

\section{Some details on the simulation in section \ref{sec:simul}}\label{sec:updating-theta}
In this section we provide some computational details for updating the $\th$-values in the MCMC-sampler.  Given the initialisation of $(X,Z,\Th)$, we numerically evaluate $\int \psi(x_i \mid \th) d G_0(\th)$ for $i=1,\dots,n$. If $g_0$ is not conjugate to the uniform distribution, we  use the random walk type Metropolis-Hastings method sampling from $f_{\Th_k\mid X,Z}$ using the normal distribution.
For update each $Z_i$, if $N_{Z_i,-i}=0$, we first remove $\Th_{Z_i}$. If we draw a new "cluster" for $Z_i$, $1+\vee(Z)$, then we also draw a new sample for $\Th_{Z_i}$ according to ($\ref{eq:fthk}$). In this case, the product $\prod_{j:z_j=k}\psi(x_j\mid\theta_k)$ only has one item, that is $f_{\Th\mid X,Z}(\th\mid x,z)\propto g_0(\th)\psi(x_i\mid\th)$. Sampling a value for $\th$ is done as follows:
\begin{enumerate}
	\item If the base density $g_0$ is as in \eqref{eq:expont}, then we use rejection sampling. To that end, if we set $Y=1/\Th$, then
	$$f_{Y\mid X,Z}(y\mid x,z)=\frac1{y^2}f_{\Th\mid X,Z}\left(\frac1{y}\mid x,z\right)=C \frac1{y}e^{-y-1/y}\mathbf{1}_{[0,1/x_i](y)},$$
	where $C$ is a constant such that $\int_0^\infty f_{Y\mid X,Z}(y\mid x,z)\,\mbox{d} y=1$.
	For reject sampling,  we choose the proposal density $g(y)$ to be uniform on $[0,1/x_i]$. Since $\frac1{y}e^{-y-1/y}\le 0.18$ for any $y>0$, an  upper bound for $\frac{f_Y(y)}{g(y)}$ is given by $M=\frac{0.18\cdot C}{x_i}$. Hence, we sample from $f_{Y\mid X,Z}$ as follows:
	\begin{enumerate}
		\item sample $y\sim g(y)$, $u\sim Unif(0,1)$;
		\item if \[ u\le \frac{f(y)}{Mg(y)}=\frac{C e^{-y-1/y}}{Myx_i}=\frac{e^{-y-1/y}}{0.18y},\] then accept and set $\th_{z_i}=1/y$; else return to step (a).
	\end{enumerate}
	\item If the base density $g_0$ is $Gamma(2,1)$, then
	\[f_{\Th\mid X,Z}(\th\mid x,z)=C e^{-\th}1_{[x_i,\infty)}(\th),\]
	where $C=1/\int_{x_i}^\infty e^{-\th}d\th=e^{x_i}$. Hence the cumulative distribution function  $F_\Theta$ satisfies $F_{\Th}(\th)=\int_{x_i}^\th Ce^{-t} dt=1-e^{x_i-\th}$, when $\th\ge x_i$.
	By the inverse cdf method, $\th$ can be sampled by first sampling $u\sim Unif(0,1)$ and next computing $x_i-\log(u)$. \end{enumerate}

\section{Results for the simulation experiment of Section \ref{sec:prior-influence} with sample size $n=1000$}
\label{sec:n=1000results_exp1}

The results with $n=1000$ are shown in figures \ref{fig:exp1000} and \ref{fig:halfnormal1000}. 

\begin{figure}[!htp]
	\begin{center}
		\includegraphics[scale=0.65]{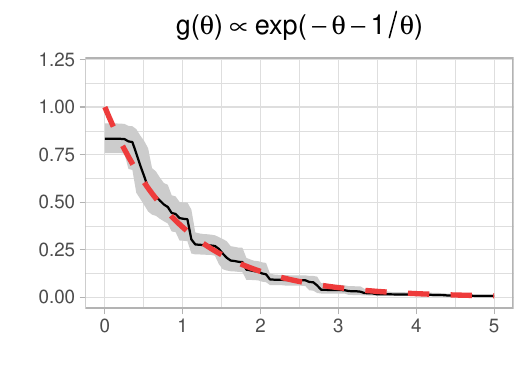}
		\includegraphics[scale=0.65]{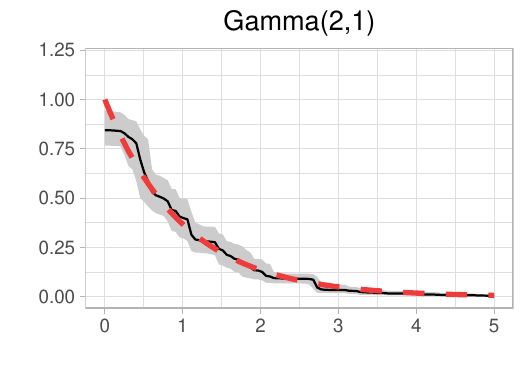}
		\includegraphics[scale=0.65]{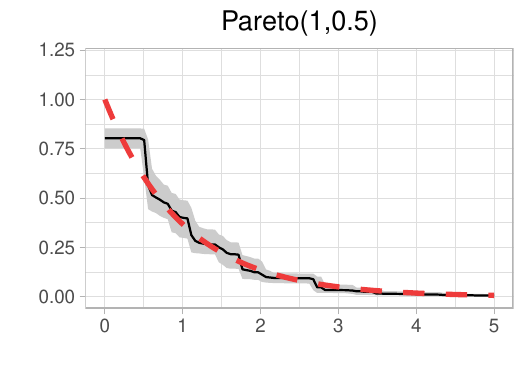}
		\includegraphics[scale=0.65]{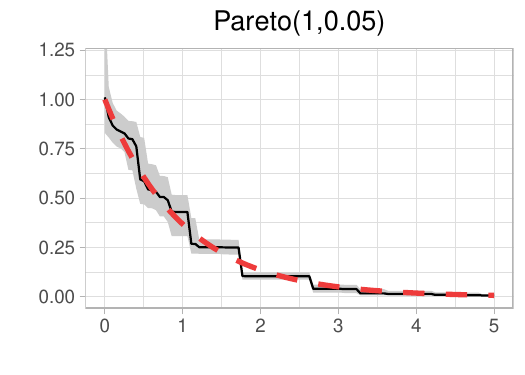}
		\includegraphics[scale=0.65]{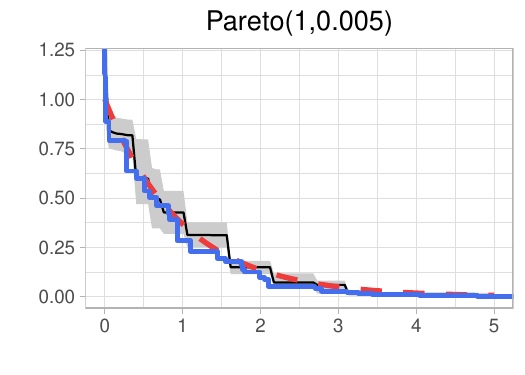}
		\includegraphics[scale=0.65]{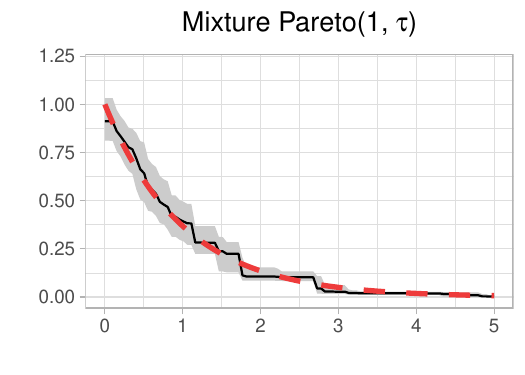}
	\end{center}
	\caption{Same experiment as in Figure \ref{fig:exp100}, this time with a sample of size $1000$ from the standard Exponential distribution. \label{fig:exp1000}}\end{figure}

\begin{figure}[!htp]
	\begin{center}
		\includegraphics[scale=0.65]{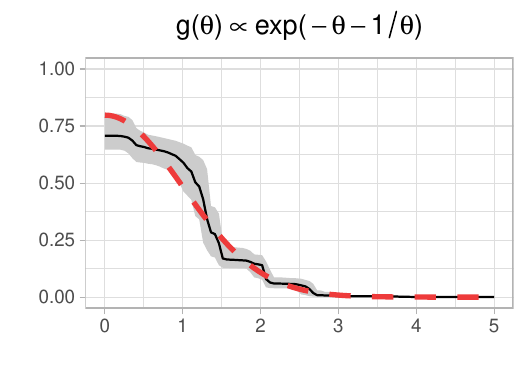}
		\includegraphics[scale=0.65]{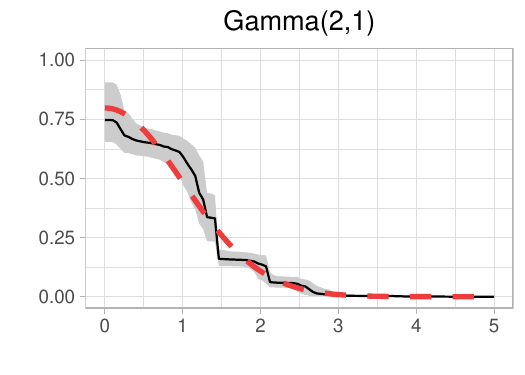}
		\includegraphics[scale=0.65]{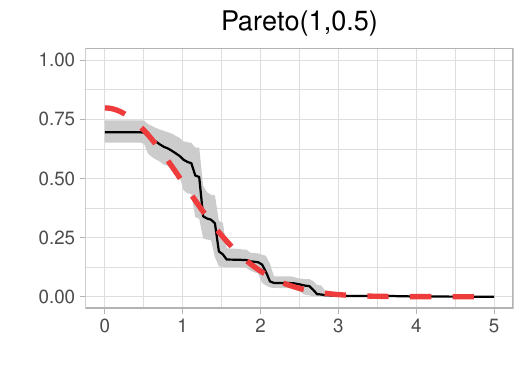}
		\includegraphics[scale=0.65]{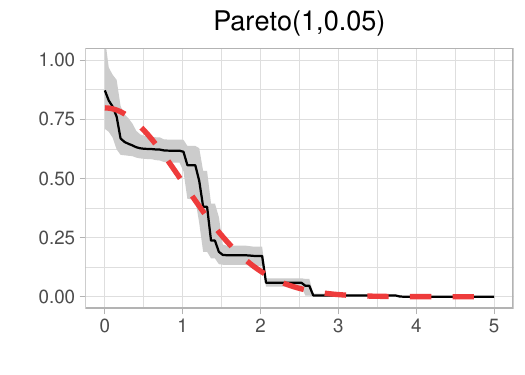}
		\includegraphics[scale=0.65]{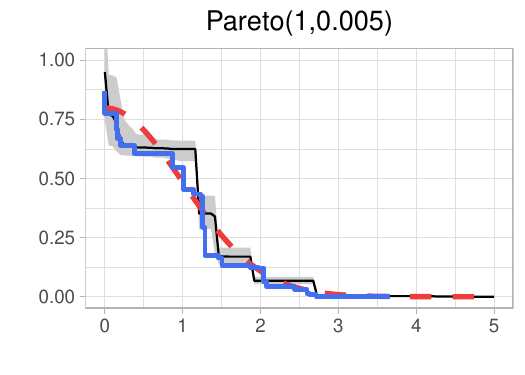}
		\includegraphics[scale=0.65]{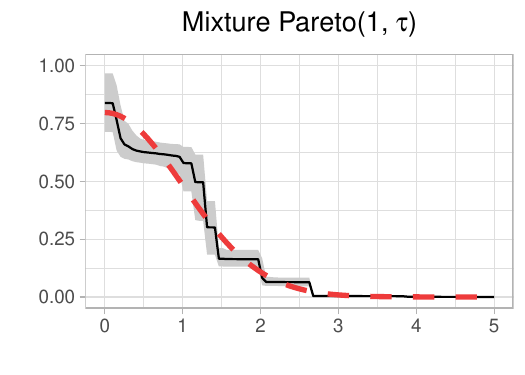}
	\end{center}
	\caption{Same experiment as in Figure \ref{fig:halfnormal100}, this time with a sample of size $1000$ from the halfNormal distribution. \label{fig:halfnormal1000}}
\end{figure}



\end{document}